\DeclareMathOperator{\End}{End}
\DeclareMathOperator{\tr}{tr}
\DeclareMathOperator{\ad}{ad}
\newcommand{\A}{{\mathcal A}}
\newcommand{\B}{{\mathcal B}}
\newcommand{\E}{{\mathcal E}}
\newcommand{\F}{{\mathcal F}}
\newcommand{\I}{{\mathcal I}}
\newcommand{\K}{{\mathcal K}}
\newcommand{\Md}{{M_2}}
\renewcommand{\P}{{\mathcal P}}
\newcommand{\C}{\ensuremath{\mathbb{C}}}
\newcommand{\R}{\ensuremath{\mathbb{R}}}
\newcommand{\p}{\partial}
\newcommand{\unit}{{\bf{1}}}
\newcommand{\U}{{\mathcal U}}
\newcommand{\V}{{\mathcal V}}
\newcommand{\x}{{x_0}}
\newcommand{\Z}{\ensuremath{\mathbb{Z}}}
\newcommand{\ba}{\begin{eqnarray*}}
\newcommand{\ea}{\end{eqnarray*}}
\newtheorem{definition}{Definition}[section]
\newtheorem{lemma}{Lemma}[section]
\newtheorem{proposition}{Proposition}[section]
\newtheorem{theorem}{Theorem}[section]
\newtheorem{corollary}{Corollary}[section]
\begin{document}

\title[Lagrangian fields]
{Lagrangian fields, Calabi functions, and local symplectic groupoids}
\author[Alexander Karabegov]{Alexander Karabegov}
\address[Alexander Karabegov]{Department of Mathematics, Abilene
Christian University, ACU Box 28012, Abilene, TX 79699-8012}
\email{axk02d@acu.edu}

\subjclass[2010]{22A22, 53D17, 53D12}
\keywords{local symplectic groupoids, Calabi functions, Lagrangian fields}

\begin{abstract}
A Lagrangian field on a symplectic manifold $M$ is a family $\Lambda=\{\Lambda_x|x \in M\}$ of pointed Lagrangian submanifolds of $M$. This notion is a generalization of a real Lagrangian polarization for which each $\Lambda_x$ is the leaf containing $x$. Two Lagrangian fields $\Lambda$ and $\tilde \Lambda$ are called transversal if $\Lambda_x$ intersects $\tilde\Lambda_x$  transversally at $x$ for every $x$. Two transversal Lagrangian fields determine an almost para-K\"ahler structure on $M$.
We construct a local symplectic groupoid on a neighborhood of the zero section of $T^\ast M$ from two transversal Lagrangian fields on $M$. The Lagrangian manifold of $n$-cycles of this groupoid in $(T^\ast M)^n$ has a generating function whose germ around the diagonal of $M^n$ is given by the $n$-point cyclic Calabi function of a closed (1,1)-form on a neighborhood of the diagonal of $M^2$ obtained from the symplectic form on~$M$.
\end{abstract}

\maketitle

\section{Introduction}

A symplectic groupoid over a Poisson manifold $M$ is a geometric object which is a heuristic semiclassical counterpart of an algebra of quantum observables on $M$. Symplectic groupoids were first introduced by Karasev \cite{Kar}, Weinstein \cite{W}, and Zakrzewski \cite{Z}. Symplectic groupoids can be treated as a starting point for quantization (see \cite{CDF} and \cite{Haw}). 

In this paper we construct local symplectic groupoids over almost para-K\"ahler manifolds. These groupoids are based on an additional geometric structure, a Lagrangian field, which is a generalization of a Lagrangian polarization. A Lagrangian field on a symplectic manifold $M$ is a family $\Lambda=\{\Lambda_x|x \in M\}$ of pointed Lagrangian submanifolds of $M$ (with $x \in \Lambda_x$). It induces a Lagrangian distribution $\{ T_x\Lambda_x, x \in M\}$ on $M$ which is not necessarily integrable.

Two Lagrangian fields $\Lambda$ and $\tilde \Lambda$ are called transversal if $\Lambda_x$ intersects $\tilde\Lambda_x$  transversally at $x$ for every $x \in M$. The corresponding transversal Lagrangian distributions determine an almost para-K\"ahler structure on~$M$. 

We construct a local symplectic groupoid on a neighborhood of the zero section of $T^\ast M$ from two transversal Lagrangian fields on $M$. Such groupoids have a rich geometric structure. We expect that they can be used for quantization of almost para-K\"ahler manifolds.

\section{General facts on groupoids}

In this section we give basic facts on groupoids and fix various conventions that will be used in the rest of the paper.  

A groupoid $\Sigma \rightrightarrows M$ is a small category with the set of objects $M$ and the set of morphisms $\Sigma$, where all morphisms are invertible. The injective unit mapping $\epsilon:M \to\Sigma$ maps an object $x$ to the identity morphism $\mathrm{id}_x : x \to x$. The set $\epsilon(M)$ of the identity morphisms in $\Sigma$ is called the set of units of the groupoid. The source and target mappings $s,t:\Sigma\to M$ map a morphism $\alpha: x \to y$ to $s(\alpha)=x$ and $t(\alpha)=y$. The involutive inverse mapping $i:\Sigma\to\Sigma$ maps $\alpha \in \Sigma$ to $i(\alpha)=\alpha^{-1}$. We will write the composition of $\alpha:y\to z$ and $\beta: x \to y$ as $\alpha\beta:x\to z$. Two morphisms $\alpha$ and $\beta$ are thus composable if $s(\alpha)=t(\beta)$. An element $(\alpha_1, \ldots,\alpha_n)\in \Sigma^n$ is an $n$-cycle in $\Sigma$ if the composition of the elements $\alpha_1, \ldots,\alpha_n\in \Sigma$ is defined and equals an identity morphism,
\[
\alpha_1\alpha_2 \ldots\alpha_n=\mathrm{id}_x,
\]
where $x= s(\alpha_n)= t(\alpha_1)$. We denote the set of $n$-cycles in $\Sigma$ by $\Sigma_n$. So, the graph of $i$ is $\Sigma_2$.

The set of objects of every groupoid in this paper is denoted by $M$. We denote the diagonal of the set $M^n$ by $M_n$, which should not lead to confusion \footnote{This notation agrees with that for the set of $n$-cycles if $M$ is treated as the identity groupoid where every morphism is an identity.}.

{\it Example} The pair groupoid of a set $M$ has the set of objects $M$ and the set of morphisms $M^2$ with $\epsilon(x)=(x,x), i(x,y)=(y,x), s(x,y)=y$, and $t(x,y)=x$, so that the unique morphism $\alpha: y \to x$ is $\alpha=(x,y)$. The set of units is the diagonal $\Md \subset M^2$. The composition of $(x,y)$ and $(y,z)$ is $(x,y)(y,z)=(x,z)$. An $n$-cycle is
\[
\{\alpha_1,\alpha_2, \ldots, \alpha_n\}=\{(x_1,x_2), (x_2,x_3), \ldots, (x_n,x_1)\},
\]
where $\alpha_i: x_{i+1} \to x_i$ for $1 \leq i \leq n-1$ and $\alpha_n:x_1\to x_n$.

A groupoid $\Sigma \rightrightarrows M$ is a Lie groupoid if $M$ and $\Sigma$ are manifolds, all structure mappings are smooth, the unit mapping is an embedding, and the source and target mappings are surjective submersions. A local Lie groupoid has the same properties as a Lie groupoid, but in order for two elements to be composable they should also be close to each other (see \cite{CDW} and \cite{AC}). A neighborhood of the unit space of a Lie groupoid is a local Lie groupoid.

A symplectic groupoid $\Sigma \rightrightarrows M$ is a Lie groupoid where $\Sigma$ is a symplectic manifold, the space of units $\epsilon(M)$ is Lagrangian, $i$ is an anti-symplectomorphism, and the set $\Sigma_3$ of 3-cycles is a Lagrangian submanifold of $\Sigma^3$ \footnote{In the conventional definition of a symplectic groupoid an equivalent condition is used: the product space $\{(\alpha, \beta, \gamma) | s(\alpha)=t(\beta), \gamma=\alpha\beta\}$ is a Lagrangian submanifold of $\Sigma \times \Sigma \times \bar\Sigma$, where $\bar \Sigma$ is a copy of $\Sigma$ with the opposite symplectic structure.}. It can be shown that then the space $\Sigma_n$ of $n$-cycles is a Lagrangian submanifold of $\Sigma^n$ for all $n\geq 2$. There exists a unique Poisson structure on $M$ such that $s$ is a Poisson morphism and $t$ is an anti-Poisson morphism. 

If $M = (M,\omega)$ is a symplectic manifold and $\bar M=(M, -\omega)$ is a copy of $M$ with the opposite symplectic structure, then the pair groupoid $\bar M \times M \rightrightarrows  M$ with the product symplectic structure on $\bar M \times M$ is a symplectic groupoid over $M$. Its diagonal unit space $\Md$ is Lagrangian. 

If $M$ is compact, by Weinstein's neighborhood theorem there exists a neighborhood $V$ of  the zero section $Z_{T^\ast M}$ in $T^\ast M$ and a mapping $f: V \to \bar M \times M$ which identifies $Z_{T^\ast M}$ with the diagonal $\Md$ of $\bar M \times M$ as copies of $M$ (i.e., if $z_x$ is the unique element in $Z_{T^\ast M} \cap T^\ast_x M$, then $f(z_x)=(x,x)$ for all $x \in M$) and is a symplectomorphism of $V$ onto $f(V)$. Then $V \rightrightarrows M$ is a local symplectic groupoid over $M$ with $s = \pi_2 \circ f$ and $t=\pi_1 \circ f$, where $\pi_1$ and $\pi_2$ are the projections of $\bar M \times M$ onto the respective factors.

Given a symplectic manifold $M$ equipped with two transversal Lagrangian fields $\Lambda$ and $\tilde\Lambda$, we construct in this paper a local symplectic groupoid $V \rightrightarrows M$, where $V$ is a neighborhood of $Z_{T^\ast M}$ in $T^\ast M$ (the compactness of  $M$ is not required). For this groupoid, $s(T^\ast_x M\cap V) \subset \Lambda_x$ and $t(T^\ast_x M \cap V) \subset \tilde\Lambda_x$ for all $x \in M$. We also show that the Lagrangian manifold of $n$-cycles of this groupoid has a generating function whose germ around the diagonal $M_n$ of $M^n$ is given by the $n$-point cyclic Calabi function of a closed (1,1)-form on a neighborhood of the diagonal of $M^2$ obtained from the symplectic form on~$M$.

\medskip

{\bf Acknowledgments} I am very grateful to Th.Th.Voronov for an important discussion on Lagrangian fields and to Alan Weinstein for valuable suggestions.

\section{A local symplectic groupoid generated by two transversal Lagrangian fields}\label{S:lf}

Below we give a technical definition of a Lagrangian field on a symplectic manifold $(M,\omega)$ of dimension $2m$. 

\begin{definition}
A Lagrangian field $\Lambda  = (E, \lambda)$ on $M$ is a vector bundle $\pi: E \to M$ of rank $m$ equipped with a surjective submersion $\lambda: E \to M$ whose restriction to each fiber $E_x, x \in M$, is an embedding onto a Lagrangian submanifold $\Lambda_x := \lambda(E_x)$ of $M$ and such that $\lambda=\pi$ on the zero section $Z_E$ of $E$.
\end{definition}

If $z_x$ is the unique element of $Z_E \cap E_x$, we have $\lambda(z_x) = \pi(z_x)=x$,  whence $x \in \Lambda_x$ for all $x \in M$. 

\begin{lemma}\label{L:xv}
There exists a unique horizontal 1-form $\phi$ on the total space of $E$ which vanishes on $Z_E$ and satisfies the equation
\begin{equation}\label{E:dvarphi}
\lambda^\ast \omega - \pi^\ast\omega = d\phi.
\end{equation}
\end{lemma}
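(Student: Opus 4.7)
My plan is to reduce the lemma to a fiber-wise Poincar\'e/homotopy construction on the vector bundle $E$, regarding $\eta:=\lambda^\ast\omega-\pi^\ast\omega$ as a closed 2-form that pulls back trivially to the zero section $Z_E$, and exhibiting it as the differential of an explicit horizontal primitive obtained by retracting $E$ along fibers.

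First I would collect three properties of $\eta$. It is closed, since $\omega$ is. Because $\lambda=\pi$ on $Z_E$ and the differentials $d\lambda$, $d\pi$ agree on vectors tangent to $Z_E$, one has $z^\ast\eta=0$, where $z:M\to E$ denotes the zero section. Most importantly, for two vertical vectors $u,v\in T_eE_{\pi(e)}$ one has $d\pi(u)=d\pi(v)=0$, while $d\lambda(u),d\lambda(v)\in T_{\pi(e)}\Lambda_{\pi(e)}$, and the latter subspace is Lagrangian; hence $\eta$ vanishes on every vertical pair. This is the only place where the Lagrangian hypothesis enters, and it is exactly what will force the primitive $\phi$ to be horizontal; recognizing and exploiting this vanishing cleanly is the main obstacle of the argument.

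Next, let $X$ be the Euler vector field on $E$ and let $m_t(e)=te$ be scalar multiplication in the fibers. I would set
\[
\phi := \int_0^1 \frac{1}{t}\, m_t^\ast(\iota_X\eta)\, dt.
\]
In any local fiber coordinate $y$, $\iota_X\eta$ is divisible by $y$ and $m_t^\ast dy=t\,dy$, so the factor $1/t$ is absorbed and the integrand extends smoothly across $t=0$. The Cartan homotopy formula, together with $d\eta=0$ and $m_0^\ast\eta=\pi^\ast z^\ast\eta=0$, then yields $d\phi=m_1^\ast\eta-m_0^\ast\eta=\eta$.

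Horizontality follows from the vanishing of $\eta$ on vertical pairs: for vertical $v\in T_eE$, the vector $dm_t(v)$ is again vertical since $\pi\circ m_t=\pi$, and the radial value $X(te)=te$ is itself vertical, so $(m_t^\ast\iota_X\eta)_e(v)=\eta_{te}(te,dm_t(v))=0$; integrating gives $\iota_v\phi=0$. Vanishing on $Z_E$ is immediate because $X|_{Z_E}=0$ and $m_t$ fixes $Z_E$ pointwise. For uniqueness, if $\psi$ is a second closed horizontal 1-form vanishing on $Z_E$, then in a local trivialization $\psi=\psi_i(x,y)\,dx^i$; closedness forces $\partial_{y^j}\psi_i=0$, so $\psi=\pi^\ast\tilde\psi$ for some 1-form $\tilde\psi$ on $M$, and $z^\ast\psi=\tilde\psi=0$ forces $\psi=0$ locally and hence globally.
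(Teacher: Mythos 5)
Your proof is correct and takes essentially the same route as the paper: both construct $\phi$ via the homotopy formula for the fiberwise scaling retraction (your $\int_0^1 t^{-1}\, m_t^\ast(\iota_X\eta)\, dt$ coincides with the paper's $\int_0^1 i_t^\ast \iota_{\partial/\partial t} F^\ast\omega\, dt$, since $\iota_X\pi^\ast\omega=0$), both obtain horizontality from the vanishing of $\omega$ on the Lagrangian images of the fibers, and both prove uniqueness by showing that a closed horizontal $1$-form is basic and hence determined by its restriction to $Z_E$. The only blemish is notational: for a vertical vector $u$ at $e$, the image $d\lambda(u)$ lies in $T_{\lambda(e)}\Lambda_{\pi(e)}$ rather than $T_{\pi(e)}\Lambda_{\pi(e)}$, which does not affect the argument.
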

\begin{proof}
Let $F:[0,1] \times E \to M$ be the homotopy $F(t,z) = \lambda(t \cdot z)$ between the mappings $\pi$ and $\lambda$, where $z \mapsto t \cdot z$ is the scalar multiplication by $t$ on the fibers of $E$. Denote by $i_t: E \to [0,1]\times E$ the inclusion mapping $i_t(z) = (t, z)$. We will show that the form
\[
\phi := \int_0^1 i^\ast_t \iota_{\frac{\p}{\p t}} F^\ast \omega\, dt,
\]
where $\iota_{\frac{\p}{\p t}}$ is an insertion operator, satisfies the conditions of the lemma. Equality (\ref{E:dvarphi}) holds by the homotopy formula. For any $x\in M$, the mapping $F$ maps $[0,1] \times E_x$ onto the Lagrangian manifold $\Lambda_x$, whence
\[
   F^\ast \omega|_{[0,1] \times E_x} = 0.
\]
If $z \in Z_E$, then $F$ maps $[0,1]\times \{z\}$ to the point $\pi(z)$. It implies that the form $i^\ast_t \iota_{\frac{\p}{\p t}} F^\ast \omega$ on $E$ is horizontal and its restriction to $Z_E$ vanishes. Hence, the same holds true for the 1-form $\phi$. 

Let $\psi$ be a closed horizontal 1-form on $E$ vanishing on $Z_E$. If $\eta$ is a vertical vector field on $E$, then $\iota_\eta \psi= 0$. Therefore, $\mathcal{L}_\eta \psi=0$, where $\mathcal{L}_\eta$ is the Lie derivative with respect to $\eta$. It follows that $\psi$ is the pullback of a 1-form on $M$ by $\pi$. Since the restriction of $\psi$ to $Z_E$ vanishes, we see that $\psi=0$, which implies the uniqueness of $\phi$. Thus, the 1-form $\phi$ satisfies all conditions of the lemma. 
\end{proof}

Let $\Lambda=(E,\lambda)$ and $\tilde\Lambda=(\tilde E, \tilde\lambda)$ be two Lagrangian fields on $M$. We denote the bundle projection for $\tilde E$ by $\tilde\pi$,  the zero section by $Z_{\tilde E}$, the unique element of $Z_{\tilde E} \cap \tilde E_x$ by $\tilde z_x$, and set $\tilde \Lambda_x := \tilde\lambda(\tilde E_x)$ for each $x\in M$. According to Lemma \ref{L:xv}, there exists a unique horizontal 1-form $\tilde\phi$ on $\tilde E$ vanishing on $Z_{\tilde E}$ and satisfying
\begin{equation}\label{E:dtvarphi}
\tilde\lambda^\ast \omega - \tilde\pi^\ast\omega = d \tilde\phi.
\end{equation}
Consider the vector bundle
\[
\hat E := E \oplus \tilde E = \{(z,\tilde z) \in E \times \tilde E| \pi(z)=\tilde\pi(\tilde z)\}.
\]
The bundle projection $\hat \pi: \hat E \to M$ is such that $\hat\pi (z, \tilde z) = \pi(z)=\tilde\pi(\tilde z)$. We introduce projections $\tau:\hat E \to E$ and $\tilde\tau:\hat E \to \tilde E$ given by $\tau(z,\tilde z)=z$ and $\tilde\tau(z,\tilde z)=\tilde z$. Then $\hat \pi = \pi \tau = \tilde\pi \tilde\tau$. We get the following pullbacks of (\ref{E:dvarphi}) and (\ref{E:dtvarphi}) by $\tau$ and $\tilde\tau$, respectively:
\[
\tau^\ast \lambda^\ast \omega - \hat\pi^\ast \omega = d \tau^\ast \phi \mbox{ and } \tilde \tau^\ast \tilde \lambda^\ast \omega - \hat\pi^\ast \omega = d \tilde \tau^\ast \tilde\phi.
\]
Subtracting these equations, we get that
\begin{equation}\label{E:ident}
\tau^\ast \lambda^\ast \omega - \tilde \tau^\ast \tilde \lambda^\ast \omega = -d(\tilde \tau^\ast \tilde\phi - \tau^\ast \phi).
\end{equation}
The 1-form $\tilde \tau^\ast \tilde\phi - \tau^\ast \phi$ is horizontal on $\hat E$ and vanishes on the zero section $Z_{\hat E}$. It determines a fiberwise nonlinear mapping
\[
\chi: \hat E \to T^\ast M
\]
which maps a point $(z, \tilde z) \in \hat E_x$ to the unique element $\chi(z, \tilde z) \in T^\ast_x M$ whose pullback by $\hat\pi$ is
\[
(d\hat\pi_{(z,\tilde z)})^\ast  \chi(z, \tilde z) = (\tilde \tau^\ast \tilde\phi - \tau^\ast \phi)(z,\tilde z) \in T^\ast_{(z,\tilde z)} \hat E.
\]
The mapping $\chi$ identifies the zero sections $Z_{\hat E}$ and $Z_{T^\ast M}$ as copies of~$M$.

Let $\theta$ be the canonical 1-form on $T^\ast M$. If $\{x^i\}$ are local coordinates on $M$ and $\{\xi_i\}$ are the dual fiber coordinates on $T^\ast M$, then $\theta= \xi_i dx^i$, where we use summation over repeated lower and upper indices. We have
\[
   \chi^\ast \theta = \tilde \tau^\ast \tilde\phi - \tau^\ast \phi.
\]
Thus, the left-hand side of (\ref{E:ident}) is equal to the pullback of the canonical symplectic form $-d\theta$ on $T^\ast M$ by the mapping $\chi$. \begin{definition}
Lagrangian fields $\Lambda$ and $\tilde \Lambda$ on $M$ are called transversal if $\Lambda_x$ transversally intersects $\tilde\Lambda_x$ at $x$ for all $x \in M$.
\end{definition}

\begin{lemma}\label{L:neighb}
Let $\Lambda$ and $\tilde\Lambda$ be two transversal Lagrangian fields on $M$.
\begin{enumerate}[(i)]
\item There exists a neighborhood $W_1$ of $Z_{\hat E}$ in $\hat E$ such that $\chi|_{W_1}$ is a diffeomorphism of $W_1$ onto a
neighborhood of $Z_{T^\ast M}$ in $T^\ast M$.
\item There exists a neighborhood $W_2$ of $Z_{\hat E}$ in $\hat E$ such that the mapping
\[
\tilde \lambda \tilde\tau \times \lambda \tau |_{W_2}: (z, \tilde z) \mapsto (\tilde \lambda(\tilde z), \lambda(z))
\]
is a diffeomorphism of $W_2$ onto a neighborhood of the diagonal $M_2$ in $M^2$.
\end{enumerate}
\end{lemma}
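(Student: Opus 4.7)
The plan for both parts is to verify that the relevant map has bijective differential at every point of $Z_{\hat E}$; combined with the fact that it restricts to a bijection on $Z_{\hat E}$ (onto the zero section $Z_{T^\ast M}$ in (i) and onto the diagonal $M_2$ in (ii)), a standard extension argument (the inverse function theorem together with a tubular neighborhood of $Z_{\hat E}$) then produces the asserted open neighborhoods $W_1$ and $W_2$ on which the map is a diffeomorphism onto a neighborhood of the image.

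For (i), I decompose the tangent space at a zero-section point $(z_x, \tilde z_x) \in Z_{\hat E}$ as $T_{(z_x, \tilde z_x)}\hat E = T_x M \oplus E_x \oplus \tilde E_x$, where the first summand is tangent to $Z_{\hat E}$ and the other two are the vertical fiber directions. Since $\chi$ identifies $Z_{\hat E}$ with $Z_{T^\ast M}$ as copies of $M$, its differential acts as the identity on $T_x M$, and the essential content is the vertical derivative. I would compute this by unfolding the defining relation $\chi^\ast \theta = \tilde\tau^\ast\tilde\phi - \tau^\ast\phi$ together with $d\phi = \lambda^\ast\omega - \pi^\ast\omega$ from Lemma \ref{L:xv}: along the curve $s \mapsto (sv, \tilde z_x)$ for $v \in E_x$, using that $\phi$ vanishes on $Z_E$, the first-order expansion of $\chi$ produces the cotangent vector $-\iota_{d\lambda(v)}\omega \in T^\ast_x M$, and analogously the curve $s \mapsto (z_x, s\tilde v)$ for $\tilde v \in \tilde E_x$ gives $\iota_{d\tilde\lambda(\tilde v)}\omega$. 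Thus the vertical derivative of $\chi$ at $(z_x, \tilde z_x)$ sends $(v, \tilde v) \mapsto -\omega^\flat(d\lambda(v)) + \omega^\flat(d\tilde\lambda(\tilde v))$, where $\omega^\flat: T_x M \to T^\ast_x M$ denotes the musical isomorphism $u \mapsto \iota_u\omega$.

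The Lagrangian property of $\Lambda_x$ and $\tilde\Lambda_x$ gives $\omega^\flat(T_x\Lambda_x) = (T_x\Lambda_x)^\circ$ and $\omega^\flat(T_x\tilde\Lambda_x) = (T_x\tilde\Lambda_x)^\circ$, the annihilators in $T^\ast_x M$. Transversality forces $T_x\Lambda_x \oplus T_x\tilde\Lambda_x = T_x M$, hence by taking annihilators $(T_x\Lambda_x)^\circ \oplus (T_x\tilde\Lambda_x)^\circ = T^\ast_x M$. Since $d\lambda|_{E_x}$ at $z_x$ and $d\tilde\lambda|_{\tilde E_x}$ at $\tilde z_x$ are isomorphisms onto $T_x\Lambda_x$ and $T_x\tilde\Lambda_x$ respectively, the vertical derivative is a linear isomorphism $E_x \oplus \tilde E_x \to T^\ast_x M$. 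Combined with the identity action on horizontal vectors, this shows $d\chi_{(z_x, \tilde z_x)}$ is an isomorphism, establishing (i).

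For (ii), a direct calculation shows that the differential of $\tilde\lambda\tilde\tau \times \lambda\tau$ at $(z_x, \tilde z_x)$ sends horizontal $w \in T_x M$ to $(w, w) \in T_x M \oplus T_x M$ and vertical $(v, \tilde v) \in E_x \oplus \tilde E_x$ to $(d\tilde\lambda(\tilde v), d\lambda(v))$. A kernel element satisfies $d\lambda(v) = -w = d\tilde\lambda(\tilde v) \in T_x\Lambda_x \cap T_x\tilde\Lambda_x$, which by transversality is zero; injectivity of the restricted differentials at the zero section then forces $v = \tilde v = 0$, and consequently $w = 0$. A dimension count completes the argument. The main obstacle of the whole proof is the vertical derivative computation in (i), which requires carefully extracting the geometric content of the horizontal form $\phi$ at the zero section; once the formula $v \mapsto -\omega^\flat(d\lambda(v))$ is in hand, both parts reduce to the same linear-algebraic consequence of the Lagrangian and transversality hypotheses.
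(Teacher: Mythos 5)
Your proposal is correct and follows essentially the same route as the paper: you compute the derivative of $\chi$ (resp.\ of $\tilde\lambda\tilde\tau\times\lambda\tau$) at points of $Z_{\hat E}$ using $d\phi=\lambda^\ast\omega-\pi^\ast\omega$, observe that the vertical block is the transversality isomorphism composed with $\omega^\flat$ (the paper's matrix factorization of (\ref{E:Jac5})), and conclude via the generalized inverse function theorem together with the identification of $Z_{\hat E}$ with $Z_{T^\ast M}$, resp.\ $M_2$. The only difference is presentational: you argue invariantly (phrasing nondegeneracy through annihilators of the Lagrangian tangent spaces) where the paper computes the Jacobians in local trivializations.
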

\begin{proof}
Let $\x$ be an arbitrary point in $M$. Since $\lambda$ and $\tilde \lambda$ identify $Z_E$ and $Z_{\tilde E}$, respectively, with $M$, there exists a coordinate chart $(\hat U, \{x^i\})$ on $M$ around $\x$, trivializations $U \times \R^m$ of $E$ and $\tilde E$ over a neighborhood $U$ of $\x$ in $\hat U$, and an open ball $B$ in $\R^m$ centered at the origin such that $\lambda(U \times B) \subset \hat U$ and $\tilde\lambda(U \times B) \subset \hat U$. We denote the coordinates on $\R^m$ by $\{v^\alpha\}$ and identify $U \times \R^m$ with coordinate charts on $E$ and $\tilde E$. Then locally a point in $E$ or $\tilde E$ can be written as $(x,v)$. The transversality of the Lagrangian fields $\Lambda$ and $\tilde \Lambda$ means that the $2m \times 2m$-matrix
\begin{equation}\label{E:Jac3}
\begin{bmatrix}
-\frac{\p \lambda^i}{\p v^\alpha}(x,0) & \frac{\p {\tilde \lambda}^i}{\p v^\beta}(x,0) 
\end{bmatrix}
\end{equation}
is nondegenerate for all $x \in U$. These trivializations of $E$ and $\tilde E$ induce a trivialization $U \times \R^m \times \R^m$ of $\hat E$ over $U$. A point $(z, \tilde z) \in \hat E$ with $z=(x,v)$ and $\tilde z = (x,w)$ will have the coordinates $(x,v,w)$. 

To prove (i), consider the neighborhood $U \times B \times B$ in $\hat E$ and let $\{\xi_i\}$ be the fiber coordinates on $T^\ast U$ dual to $\{x^i\}$. 
If $\phi=\phi_i(x,v) dx^i$ on $U \times \R^m \approx E|_U$ and $\tilde\phi=\tilde\phi_i(x,v) dx^i$ on $U \times \R^m \approx \tilde E|_U$, the mapping $\chi$ will be written in coordinates as
 \[
 x^i = x^i \quad \xi_j ={\tilde\phi}_j(x,w) - \phi_j(x,v).
 \]
The Jacobian of this mapping is
 \[
\begin{bmatrix}
\delta^i_k & 0 & 0\\
\frac{\p \tilde\phi_j}{\p x^k}(x,w) - \frac{\p \phi_j}{\p x^k}(x,v) & -\frac{\p \phi_j}{\p v^\alpha}(x,v) & \frac{\p {\tilde\phi}_j}{\p v^\beta}(x,w)
\end{bmatrix}.
\]
Since $\phi_i(x,0)=0$ and $\tilde\phi_i(x,0)=0$, at the point $(x,0,0)$ the Jacobian is equal to
\begin{equation}\label{E:Jac4}
\begin{bmatrix}
\delta^i_k & 0 & 0\\
0 & -\frac{\p \phi_j}{\p v^\alpha}(x,0) & \frac{\p {\tilde\phi}_j}{\p v^\beta}(x,0)
\end{bmatrix}.
\end{equation}
We will show that the matrix
\begin{equation}\label{E:Jac5}
\begin{bmatrix}
-\frac{\p \phi_j}{\p v^\alpha}(x,0) & \frac{\p {\tilde\phi}_j}{\p v^\beta}(x,0)
\end{bmatrix}
\end{equation}
is nondegenerate. In local coordinates we write
\[
\omega = \frac{1}{2} \omega_{ij}(x) dx^i \wedge dx^j
\]
and get from (\ref{E:dvarphi}) that
\[
  \frac{\p \phi_k}{\p v^\alpha} = \omega_{ij}(\lambda) \frac{\p \lambda^i}{\p v^\alpha} \frac{\p \lambda^j}{\p x^k}.
\]
It follows from the equality $\lambda(x,0)=x$ that
\[
\frac{\p \phi_k}{\p v^\alpha}(x,0) =  \frac{\p \lambda^i}{\p v^\alpha}(x,0) \omega_{ik}(x).
\]
Similarly,
\[
\frac{\p \tilde\phi_k}{\p v^\alpha}(x,0) =  \frac{\p \tilde\lambda^i}{\p v^\alpha}(x,0) \omega_{ik}(x).
\]
Therefore, we have the following matrix product representation of (\ref{E:Jac5}),
\[
\begin{bmatrix}
-\frac{\p \phi_k}{\p v^\alpha}(x,0) & \frac{\p \tilde\phi_k}{\p v^\beta}(x,0)
\end{bmatrix}
= \begin{bmatrix}
- \frac{\p \lambda^i}{\p v^\alpha}(x,0) & \frac{\p \tilde\lambda^i}{\p v^\beta}(x,0)
\end{bmatrix}
\cdot
\begin{bmatrix}
\omega_{ik}(x)
\end{bmatrix}.
\]
Since the matrix (\ref{E:Jac3}) is nondegenerate, we see that (\ref{E:Jac5}) is also nondegenerate, which implies that the Jacobian (\ref{E:Jac4}) is nondegenerate as well.
Now part (i) of the lemma follows from the fact that the mapping $\chi$ identifies $Z_{\hat E}$ with $Z_{T^\ast M}$ as copies of $M$ and from the generalized inverse function theorem.

To prove part (ii), consider a neighborhood $U \times B \times B$ as in part~(i) and the mapping
\begin{equation}\label{E:triple}
(x,v,w) \mapsto (\tilde\lambda(x,w), \lambda(x,v))
\end{equation}
from $U \times B \times B$ to $M \times M$. It maps $(x,0,0)$ to $(x,x)$. Its Jacobian at $(x,v,w)$ is
\[
\begin{bmatrix}
\frac{\p \tilde\lambda^i}{\p x^k}(x,w) & 0 & \frac{\p \tilde\lambda^i}{\p v^\beta}(x,w)\\
\frac{\p \lambda^j}{\p x^k}(x,v) & \frac{\p \lambda^j}{\p v^\alpha}(x,v) & 0
\end{bmatrix}.
\]
At the point $(x,0,0)$ the Jacobian is equal to
\[
\begin{bmatrix}
\delta^i_k & 0 & \frac{\p \tilde\lambda^i}{\p v^\beta}(x,0)\\
\delta^j_k  & \frac{\p \lambda^j}{\p v^\alpha}(x,0) &0
\end{bmatrix}.
\]
It is nondegenerate because (\ref{E:Jac3}) is nondegenerate. Part (ii) also follows from the generalized inverse function theorem.
\end{proof}

Using the notations from Lemma \ref{L:neighb}, we set
\[
W := W_1\cap W_2 \subset \hat E\mbox{ and }V:= \chi (W) \subset T^\ast M.
\]
There exists a unique mapping $f: V \to M^2$ which is a diffeomorphism of $V$ onto $f(V)=(\tilde \lambda \tilde\tau \times \lambda \tau)(W)$ satisfying
\begin{equation}\label{E:tll}
f \circ \chi |_{W} = \tilde \lambda \tilde\tau \times \lambda \tau |_{W}.
\end{equation}
The neighborhood $V$ contains $Z_{T^\ast M}$ and $f(V)$ contains the diagonal $M_2$ of $M^2$. The mapping $f$ identifies $Z_{T^\ast M}$ with $M_2$ as copies of~$M$. Recall that $\pi_1$ and $\pi_2$ are the projections of $M^2$ onto the respective factors.

\begin{theorem}\label{T:locgr}
The mapping $f$ determines a local symplectic groupoid $V \rightrightarrows M$ with $s= \pi_2 \circ f$ and $t= \pi_1\circ f$ such that
\begin{equation}\label{E:incl}
s(T^\ast_x M \cap V) \subset \Lambda_x\mbox{ and }t(T^\ast_x M \cap V) \subset \tilde\Lambda_x
\end{equation}
for all $x \in M$.
\end{theorem}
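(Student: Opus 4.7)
My plan is to transport the symplectic groupoid structure of the pair groupoid $\bar M \times M \rightrightarrows M$, restricted to the neighborhood $f(V)$ of $\Md$, back to $V$ via $f$, using exactly the same template as the Weinstein-type argument sketched in Section~2. Because $f$ is already known to be a diffeomorphism of $V$ onto $f(V)$ identifying $Z_{T^\ast M}$ with $\Md$ as copies of $M$, and because $f(V)$ is a neighborhood of the unit space of the pair groupoid, the transported structure will automatically be a local Lie groupoid with unit space $Z_{T^\ast M}$, $s = \pi_2 \circ f$, and $t = \pi_1 \circ f$. The only nontrivial point is to check that $f$ is a symplectomorphism with respect to the canonical symplectic form $-d\theta$ on $V$; the inclusions (\ref{E:incl}) will then fall out by chasing the definitions.

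To verify the symplectic property, I would work on $W$ and use the fact that $\chi : W \to V$ is a diffeomorphism by Lemma~\ref{L:neighb}(i), so it suffices to establish the identity after pullback by $\chi$. Writing $\omega_{\bar M \times M} := \pi_1^\ast(-\omega) + \pi_2^\ast \omega$ for the symplectic form on $\bar M \times M$, the relation $f \circ \chi = \tilde\lambda \tilde\tau \times \lambda \tau$ from (\ref{E:tll}) gives
\[
(f \circ \chi)^\ast \omega_{\bar M \times M} \;=\; \tau^\ast \lambda^\ast \omega - \tilde\tau^\ast \tilde\lambda^\ast \omega,
\]
and identity (\ref{E:ident}) rewrites this as $-d(\tilde\tau^\ast \tilde\phi - \tau^\ast \phi) = -d(\chi^\ast \theta) = \chi^\ast(-d\theta)$. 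Pulling back by $\chi^{-1}$ yields $f^\ast \omega_{\bar M \times M} = -d\theta$ on $V$, as required.

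For the inclusions (\ref{E:incl}), given $\alpha \in T^\ast_x M \cap V$ I would set $(z, \tilde z) := \chi^{-1}(\alpha)$; since $\chi$ covers the identity on $M$, the point $(z,\tilde z)$ lies in $\hat E_x$, so $z \in E_x$ and $\tilde z \in \tilde E_x$. From (\ref{E:tll}), $f(\alpha) = (\tilde\lambda(\tilde z), \lambda(z))$, whence $s(\alpha) = \lambda(z) \in \Lambda_x$ and $t(\alpha) = \tilde\lambda(\tilde z) \in \tilde\Lambda_x$.

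I do not anticipate a real obstacle: the construction of $\chi$ and the choice of $\phi,\tilde\phi$ in Lemma~\ref{L:xv} have been arranged precisely so that (\ref{E:ident}), read through the substitution $f \circ \chi = \tilde\lambda\tilde\tau \times \lambda\tau$, becomes the statement that $f$ pulls back the pair-groupoid symplectic form to $-d\theta$. Once this is known, the local symplectic groupoid structure on $V$ is just the restriction of the pair-groupoid structure pulled back along $f$.
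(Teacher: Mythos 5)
Your proposal is correct and follows essentially the same route as the paper: identity (\ref{E:ident}) together with $\chi^\ast\theta=\tilde\tau^\ast\tilde\phi-\tau^\ast\phi$ and $f\circ\chi=\tilde\lambda\tilde\tau\times\lambda\tau$ gives $f^\ast(\pi_2^\ast\omega-\pi_1^\ast\omega)|_V=-d\theta|_V$, after which the local groupoid structure is pulled back from the pair groupoid and the inclusions follow by tracing $T^\ast_xM\cap V$ through $\chi^{-1}$ and $f$, exactly as in the paper. You merely make explicit the pullback-by-$\chi$ step that the paper leaves implicit.
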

\begin{proof}
Eqn (\ref{E:ident}) implies that
\[
f^\ast (\pi^\ast_2 \omega - \pi^\ast_1 \omega)|_V = -d\theta|_V,
\]
which means that $V$ is a local symplectic groupoid over $M$ with $s$ and $t$ as in the statement of the lemma. The inclusions (\ref{E:incl}) hold true because $f$ maps $T^\ast_x M \cap V$ to the set
\[
\{(\tilde\lambda(\tilde z), \lambda(z))| z \in E_x, \tilde z \in \tilde E_x\}
\]
and for $z \in E_x$ and $\tilde z \in \tilde E_x$ one has $\lambda(z) \in \Lambda_x$ and $\tilde\lambda(\tilde z)\in \tilde\Lambda_x$.
\end{proof}

It follows from (\ref{E:tll}) that
\begin{equation}\label{E:sandt}
s \circ \chi |_{W} = \lambda \tau |_{W} \mbox{ and } t \circ \chi |_{W} = \tilde \lambda \tilde \tau  |_{W}.
\end{equation}

\section{The intersection mapping $\gamma$}\label{S:gamma}

In this section we will show that for two transversal Lagrangian fields $\Lambda = (E, \lambda)$ and $\tilde \Lambda=(\tilde E, \tilde\lambda)$ on $M$ there exists a neighborhood $V_\Gamma$ of the diagonal $M_2$ in $M^2$ and a mapping $\gamma:V_\Gamma \to M$ such that $\gamma(x,y) \in \Lambda_x \cap \tilde \Lambda_y$ for all $(x,y)\in V_\Gamma$. We will also show that $\Omega:=\gamma^\ast \omega$ is a closed (1,1)-form with respect to the product structure on $V_\Gamma$.

Consider the surjective submersion $(z, \tilde z) \mapsto (\lambda(z),\tilde\lambda(\tilde z))$ of $E \times \tilde E$ onto $M \times M$. The inverse image of the diagonal $M_2$ of $M^2$ is a $4m$-dimensional submanifold
\[
\Gamma:= \{(z, \tilde z) \in E \times \tilde E | \lambda(z)=\tilde\lambda(\tilde z)\}
\]
of $E \times \tilde E$. Because $\lambda(z) = \pi(z)$ for $z \in Z_E$ and $\tilde\lambda(\tilde z) = \tilde\pi(\tilde z)$ for $\tilde z \in Z_{\tilde E}$, the zero section $Z_{\hat E}= \{(z,\tilde z) \in Z_E \times Z_{\tilde E} | \pi(z)=\tilde\pi(z)\}$ of $\hat E$ is also a submanifold of $\Gamma$.

\begin{lemma}\label{L:gamma}
If $\Lambda$ and $\tilde\Lambda$ are transversal Lagrangian fields on $M$, there exists a neighborhood $W_\Gamma$ of $Z_{\hat E}$ in $\Gamma$ and a neighborhood $V_\Gamma$ of the diagonal $M_2$ in $M^2$ such that the mapping
\[
\pi \times \tilde\pi \big |_{W_\Gamma}: (z, \tilde z) \mapsto (\pi(z), \tilde\pi(\tilde z))
\]
is a diffeomorphism of $W_\Gamma$ onto $V_\Gamma$.
\end{lemma}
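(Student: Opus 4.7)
The plan is to show that the smooth map $\pi\times\tilde\pi:\Gamma\to M^2$ is a local diffeomorphism at every point of $Z_{\hat E}$, carries $Z_{\hat E}$ bijectively onto the diagonal $M_2$, and then to upgrade this to a single neighborhood of $Z_{\hat E}$ by a standard shrinking argument.

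First I would reduce to the implicit function theorem. Fix $\x\in M$ and work in the trivializations built in the proof of Lemma~\ref{L:neighb}: coordinates $\{x^i\}$ on a neighborhood $\hat U$ of $\x$, fiber coordinates $\{v^\alpha\}$ on $E|_U$ and $\{w^\beta\}$ on $\tilde E|_U$. On the open set $E|_U\times\tilde E|_U\subset E\times\tilde E$ I use coordinates $(x,v,y,w)$, where now $(x,v)\in U\times B$ is the coordinate on $E|_U$ and $(y,w)\in U\times B$ is the coordinate on $\tilde E|_U$. Locally $\Gamma$ is cut out by the $2m$ equations
\[
\Phi(x,v,y,w):=\lambda(x,v)-\tilde\lambda(y,w)=0,
\]
and $\pi\times\tilde\pi$ becomes the projection $(x,v,y,w)\mapsto(x,y)$. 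A point of $Z_{\hat E}$ corresponds to $(x,0,x,0)$.

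Next I would apply the implicit function theorem at such a point. Since $\lambda(x,0)=x$ and $\tilde\lambda(y,0)=y$, the partial Jacobian of $\Phi$ with respect to the fiber variables at $(x,0,x,0)$ is
\[
\left[\frac{\p\Phi^i}{\p v^\alpha},\frac{\p\Phi^i}{\p w^\beta}\right]_{(x,0,x,0)}
=\left[\frac{\p\lambda^i}{\p v^\alpha}(x,0),\;-\frac{\p\tilde\lambda^i}{\p w^\beta}(x,0)\right],
\]
which is the matrix \eqref{E:Jac3} (up to a sign in one block) and is therefore nondegenerate by the transversality assumption. The implicit function theorem then produces smooth functions $v=v(x,y)$ and $w=w(x,y)$ defined on a neighborhood of $(x_0,x_0)$ in $U\times U$ that solve $\Phi=0$ uniquely in a neighborhood of $(x_0,0,x_0,0)$. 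This gives a smooth local inverse of $\pi\times\tilde\pi$ at every point of $Z_{\hat E}$ and, in particular, identifies $\pi\times\tilde\pi|_{Z_{\hat E}}$ with the canonical diffeomorphism $Z_{\hat E}\cong M\cong M_2$.

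Finally I would pass from these local inverses to a single neighborhood $W_\Gamma$ of $Z_{\hat E}$ on which $\pi\times\tilde\pi$ is a diffeomorphism onto a neighborhood $V_\Gamma$ of $M_2$. This is the only step that requires a little care, since $M$ need not be compact: it is the standard argument that a local diffeomorphism which restricts to a homeomorphism on a closed embedded submanifold (here $Z_{\hat E}$, sitting in $\Gamma$ as a closed subset, mapped bijectively onto the closed submanifold $M_2\subset M^2$) is, after shrinking, a diffeomorphism on an open neighborhood of that submanifold; one constructs such a neighborhood by covering $Z_{\hat E}$ by local injectivity charts and cutting down with a locally finite refinement. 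This is the only real obstacle; the computation at the heart of the argument is a direct unpacking of the transversality hypothesis through the implicit function theorem.
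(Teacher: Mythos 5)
Your proposal is correct and follows essentially the same route as the paper: the paper likewise writes $\Gamma$ locally as the zero set of $F(x,v,y,w)=\lambda(x,v)-\tilde\lambda(y,w)$, observes that transversality makes the fiber-variable Jacobian (the negative of the matrix (\ref{E:Jac3})) nondegenerate, solves for the fiber variables by the implicit function theorem to get a local inverse of $\pi\times\tilde\pi$ at each point of $Z_{\hat E}$, and then concludes by noting that $\pi\times\tilde\pi$ identifies $Z_{\hat E}$ with $M_2$ and invoking the generalized inverse function theorem. Your final shrinking argument is just an explicit unpacking of that last step.
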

\begin{proof}
Let $\x$ be an arbitrary point in $M$.  As in the proof of Lemma \ref{L:neighb}, we fix a coordinate chart $(\hat U, \{x^i\})$ on $M$ around $\x$, a neighborhood $U$ of $\x$ in $\hat U$, trivializations $U \times \R^m$ of $E$ and $\tilde E$ over $U$, and an open ball $B$ in $\R^m$ centered at the origin such that $\lambda(U \times B) \subset \hat U$ and $\tilde\lambda(U \times B) \subset \hat U$. In the calculations below we use local coordinates $(x,v)$ on $U \times \R^m \approx E|_U$ and $(y,w)$ on $U \times \R^m \approx \tilde E|_U$. Consider the function
\[
F(x, v, y, w) := \lambda(x, v) - \tilde\lambda(y, w)
\]
from $U \times B \times U \times B$ to $\R^{2m}$. Then $F(x,0,x,0)=0$, because $\lambda(x,0) =x$ and $\tilde\lambda(x,0)=x$. Since $\Lambda$ and $\tilde\Lambda$ are transversal, the matrix
\[
\begin{bmatrix}
\frac{\p F^i}{\p v^\alpha} (x,0,x,0) & \frac{\p F^i}{\p w^\beta}(x,0,x,0)
\end{bmatrix}
=
\begin{bmatrix}
\frac{\p\lambda^i}{\p v^\alpha}(x,0) & - \frac{\p \tilde\lambda^i}{\p w^\beta}(x,0)
\end{bmatrix}
\]
is nondegenerate for all $x \in U$. By the implicit function theorem, there exists a neighborhood $U_0$ of $\x$ in $U$ and unique mappings $g(x,y)$ and $\tilde g(x,y)$ from $U_0^2$ to $B$ such that $g(\x,\x)=0,\tilde g(\x,\x)=0$, and
\[
F(x, g(x,y),y,\tilde g(x,y))=\lambda(x,g(x,y)) - \tilde\lambda(y, \tilde g(x,y))=0
\]
for all $(x,y)\in U_0^2$. Then the mapping
\[
G(x,y) := (x, g(x,y), y, \tilde g(x,y))
\]
maps $U_0^2$ to~$\Gamma$. In particular, $G(\x,\x)=(\x,0,\x,0)$. It is a diffeomorphism of $U_0^2$ onto $G(U_0^2)$ whose inverse is $\pi \times \tilde\pi \big |_{G(U_0^2)}$. Thus the mapping
\[
\pi \times \tilde\pi \big |_\Gamma : \Gamma \to M^2
\]
is a submersion at all points of $Z_{\hat E}$. It also identifies $Z_{\hat E}$ with $M_2$ as copies of $M$. The lemma follows from the generalized inverse function theorem.
\end{proof}

Lemma \ref{L:gamma} implies that there exists a unique mapping $\gamma: V_\Gamma \to M$ such that
\begin{equation}\label{E:defg}
\gamma(\pi(z), \tilde\pi(\tilde z)) = \lambda(z)=\tilde\lambda(\tilde z)
\end{equation}
for all $(z, \tilde z) \in W_\Gamma$. The main property of this mapping is that
\begin{equation}\label{E:maing}
\gamma(x,y) \in \Lambda_x \cap \tilde \Lambda_y
\end{equation}
for all $(x,y)\in V_\Gamma$. We will show that $\gamma(x,x)=x$ for all $x \in M$. We have
$(z_x,\tilde z_x) \in Z_{\hat E} \subset W_\Gamma$ and
\[
\gamma(x,x) = \gamma(\pi(z_x), \tilde\pi(\tilde z_x)) = \lambda(z_x) = \pi (z_x) = x.
\]
Let $\epsilon: M \to M^2$ be the diagonal inclusion, $\epsilon(x)=(x,x)$ for $x\in M$.  We have shown that
\begin{equation}\label{E:geps}
\gamma \epsilon = \mathrm{id}_M.
\end{equation} 
We introduce a closed  2-form on $V_\Gamma$,
\[
\Omega := \gamma^\ast \omega.
\]
It follows from (\ref{E:geps}) that $\epsilon^\ast \Omega = \omega$ and therefore
\begin{equation}\label{E:omom}
\gamma ^\ast\epsilon^\ast \Omega = \Omega.
\end{equation} 
\begin{lemma}\label{L:oneone}
The form $\Omega$ is a $(1,1)$-form with respect to the product structure on $M^2$.
\end{lemma}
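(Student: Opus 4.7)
The plan is to exploit the defining property (\ref{E:maing}) of $\gamma$, namely $\gamma(x,y) \in \Lambda_x \cap \tilde\Lambda_y$, together with the fact that $\omega$ vanishes on any Lagrangian submanifold. Being a $(1,1)$-form with respect to the product structure on $M^2$ means, in local coordinates $(x^i, y^j)$, that $\Omega$ is a sum of terms of the form $dx^i \wedge dy^j$; equivalently, that $\Omega$ vanishes whenever evaluated on two tangent vectors both tangent to the first factor, or both tangent to the second factor.

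First, I would fix $y$ and consider the slice $S_y := \{x \in M : (x,y) \in V_\Gamma\} \times \{y\} \subset V_\Gamma$, together with the restricted map $\gamma_y : S_y \to M$ given by $\gamma_y(x) = \gamma(x,y)$. By (\ref{E:maing}), the image of $\gamma_y$ lies in the Lagrangian submanifold $\tilde\Lambda_y$. Since $\omega|_{\tilde\Lambda_y} = 0$, it follows that $\gamma_y^\ast \omega = 0$, i.e., $\Omega|_{S_y} = 0$. This shows that $\Omega(u_1, u_2) = 0$ whenever $u_1, u_2$ are tangent vectors at $(x,y)$ both lying in the first factor $T_xM \oplus 0 \subset T_{(x,y)}M^2$.

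Symmetrically, fixing $x$ and letting $y$ vary on the corresponding slice, the map $y \mapsto \gamma(x,y)$ has image in the Lagrangian submanifold $\Lambda_x$, and the same argument yields that $\Omega$ vanishes on any pair of vectors both tangent to the second factor. Combining the two vanishing statements gives that $\Omega$ has neither a $(2,0)$ nor a $(0,2)$ component with respect to the product bigrading, so $\Omega$ is of type $(1,1)$.

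I do not anticipate any genuine obstacle: the result is an immediate consequence of (\ref{E:maing}) and the Lagrangian property of the fibers $\Lambda_x$ and $\tilde\Lambda_y$. The only point worth verifying carefully is that the slices $S_y$ and $\{x\} \times \{y : (x,y) \in V_\Gamma\}$ are nonempty open subsets of $M$ containing $x$ (respectively $y$), which follows because $V_\Gamma$ is an open neighborhood of the diagonal $M_2$.
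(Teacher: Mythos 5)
Your proposal is correct and follows essentially the same argument as the paper: for a pair of vectors both of type $(1,0)$ (resp.\ $(0,1)$), the differential of $\gamma$ sends them into the tangent space of the Lagrangian submanifold $\tilde\Lambda_y$ (resp.\ $\Lambda_x$) by (\ref{E:maing}), so $\Omega$ vanishes on such pairs. The only cosmetic difference is that you phrase this as the vanishing of the pullback $\gamma_y^\ast\omega$ on slices, while the paper argues pointwise with paths and tangent vectors; the content is identical.
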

\begin{proof}
The tangent space $T_{(x,y)} V_\Gamma$ at any point $(x,y)\in V_\Gamma$ is the direct sum of the subspaces of vectors of type $(1,0)$ and $(0,1)$ with respect to the product structure on $M^2$,
\[
T_{(x,y)} V_\Gamma = T^{(1,0)}_{(x,y)} V_\Gamma \oplus T^{(0,1)}_{(x,y)} V_\Gamma.
\]
Any tangent vector $u \in T^{(1,0)}_{(x,y)} V_\Gamma$ is the tangent vector at the initial point $(x(0),y)=(x,y)$ of some path $t \mapsto (x(t),y)$ in $V_\Gamma$. It follows from (\ref{E:maing}) that $\gamma$ maps this path to a path in the Lagrangian manifold $\tilde\Lambda_y$ with the initial point $\gamma(x,y)$. Therefore,
\[
d\gamma_{(x,y)} (u) \in T_{\gamma(x,y)} \tilde \Lambda_y.
\]
Similarly, if $u \in T^{(0,1)}_{(x,y)} V_\Gamma$, then
\[
d\gamma_{(x,y)} (u) \in T_{\gamma(x,y)} \Lambda_x.
\]
Thus, if tangent vectors $u$ and $u^\prime$ in $T_{(x,y)} V_\Gamma$ are both of type $(1,0)$ or both of type $(0,1)$, then $\Omega(u,u^\prime)=0$. It implies that $\Omega$ is of type $(1,1)$ with respect to the product structure on $M^2$. 
\end{proof}

\section{Calabi functions}

In \cite{C} Calabi introduced a diastasic function $D_X$ of a real-analytic K\"ahler form $\omega_X$ on a K\"ahler manifold $X$. A local real-analytic potential $\Phi_X(z,\bar z)$ of $\omega_X$ is defined up to a summand which is the real part of an analytic function. If $\Phi(z,\bar w)$ is a complex analytic continuation of $\Phi(z,\bar z)$, the diastatic function is locally defined as follows:
\[
D_X= \Phi_X(z, \bar w) + \Phi_X(w, \bar z) - \Phi_X(z,\bar z) - \Phi_X(w,\bar w).
\]
The function $D_X$ does not depend on the choice of the local potential $\Phi_X$ and is defined on a neighborhood of the diagonal of $X^2$. One can similarly define multivariate functions corresponding to a closed (1,1)-form on a product space. 

Let $X$ and $Y$ be two manifolds, $W$ be an open subset of $X \times Y$, and $\Omega$ be a closed 2-form on $W$ of type (1,1) with respect to the product structure on $W$. Denote by $X_k$ and $Y_l$ the diagonals of $X^k$ and $Y^l$, respectively, by $\pi_{ij}: X^k \times Y^l \to X \times Y$ the projection that maps $(x_1, \ldots, x_k, y_1, \ldots, y_l) \in X^k \times Y^l$ to $(x_i,y_j)$, and by $i: X \times Y \mapsto X^k \times Y^l$ the diagonal inclusion that maps $(x,y) \in X \times Y$ to $(x, \ldots,x,y,\ldots,y)\in X_k \times Y_l$. The set
\[
\hat W := \bigcap\limits_{i,j} \pi_{ij}^{-1} (W)
\]
is an open subset of $X^k \times Y^l$ containing $i(W)$. Let $C_{ij}$ be a $k \times l$-matrix such that $\sum_i C_{ij}=0$ for all $j$ and $\sum_j C_{ij} =0$ for all $i$. The form
\[
\hat \Omega:= \sum\limits_{i,j} C_{ij}\pi_{ij}^\ast \Omega
\]
is a closed 2-form on $\hat W$ of type (1,1) with respect to the product structure on $X^k \times Y^l$. Let $d=d_x+d_y$ be the corresponding decomposition of the exterior derivative operator on $\hat W$. 
\begin{definition}
Let $W_0$ be a neighborhood of $i(W)$ in $\hat W$.
We say that a function $C$ on $W_0$ is a Calabi function associated with the form $\Omega$ and the matrix $C_{ij}$ if $\hat\Omega = d_xd_y C$, $C|_{(X_k \times Y^l) \cap W_0}=0$, and $C|_{(X^k \times Y_l) \cap W_0}=0$.
\end{definition}

The condition $\hat\Omega = d_xd_y C$ means that $C$ is a potential of the closed (1,1)-form~$\hat\Omega$. We will show that the Calabi functions associated with the form $\Omega$ and the matrix $C_{ij}$ exist. We fix Riemannian metrics on $X$ and $Y$ and for every point $(a,b)\in W$ choose geodesically convex neighborhoods $U_{a,b}$ of $a$ in $X$ and $V_{a,b}$ of $b$ in $Y$ such that $U_{a,b} \times V_{a,b} \subset W$. There exists a potential $\Phi_{a,b}(x,y)$ of the form $\Omega$ on $U_{a,b} \times V_{a,b}$. It is determined up to a summand $f(x) + g(y)$. We define a Calabi function $C_{a,b}$ on $U^k_{a,b} \times V^l_{a,b}$ by the formula
\[
C_{a,b}(x_1, \ldots,x_k,y_1, \ldots,y_l):= \sum\limits_{ij} C_{ij} \Phi_{a,b}(x_i,y_j).
\]
It does not depend on the choice of the potential $\Phi_{a,b}$. 
Moreover, for another point $(c,d) \in W$ the functions $C_{a,b}$ and $C_{c,d}$ agree on
\[
(U^k_{a,b} \times V^l_{a,b}) \cap (U^k_{c,d} \times V^l_{c,d}) = (U_{a,b} \cap U_{c,d})^k  \times (V_{a,b} \cap V_{c,d})^l.
\]
Therefore, the functions $C_{a,b}$ determine a Calabi function $C$ on
\[
W_0 := \bigcup\limits_{(a,b)\in W} U^k_{a,b} \times V^l_{a,b}.
\]
It is easy to check that the Calabi functions associated with the form $\Omega$ and the matrix $C_{ij}$ define the same germ around $i(W)$.

In the rest of the paper we will use a modification of these Calabi functions. Assume that $X=Y$ and $W$ is an open subset of $X^2$ containing the diagonal $X_2$.
Let $C_n$ be a Calabi function associated with a closed (1,1)-form $\Omega$ on $W$ and an $n \times n$-matrix $C_{ij}$. We introduce a modified Calabi function
\[
\mathcal{C}_n(x_1, \ldots, x_n) := C_n(x_1, \ldots, x_n,x_1, \ldots, x_n)
\]
which is defined on a neighborhood of the diagonal $X_n$ in $X^n$. 

\section{A generating function of the Lagrangian manifold of $n$-cycles}\label{S:gamma}

Let $\Omega$ be the closed (1,1)-form on $V_\Gamma \subset M^2$ introduced in Section~\ref{S:gamma} and $C_{ij}$ be an $n \times n$-matrix such that
\[
C_{ij}=
\begin{cases}
1 & \mbox{ if } j=i+1 \mbox{ for } 1 \leq i \leq n-1 \mbox{ or } (i,j)=(n,1);\\
-1 & \mbox{ if } i=j;\\
0 & \mbox{ otherwise.}
\end{cases}
\]
Then there exists a neighborhood $V_0$ of the diagonal $M_n$ in $M^n$ and a Calabi function $\mathcal{C}_n$ on $V_0$ associated with the form $\Omega$ and the matrix~$C_{ij}$.
For any point $\x \in M$ there exists a contractible neighborhood $U$ of $\x$ in $M$ such that $U^2 \subset V_\Gamma$, $U^n \subset V_0$, and\begin{eqnarray}\label{E:ncalabi}
   \mathcal{C}_n(x_1, \ldots,x_n) = \Phi(x_1,x_2) -\Phi(x_2,x_2) + \Phi(x_2, x_3) -\\
     \Phi(x_3,x_3) + \ldots   - \Phi(x_n,x_n) + \Phi(x_n,x_1)  - \Phi(x_1,x_1)  \nonumber
\end{eqnarray}
 for any potential $\Phi(x,y)$ of the form $\Omega$ on $U^2$. We call the function $\mathcal{C}_n$ the cyclic $n$-point Calabi function  of the form $\Omega$. Now we will show that the germ of the generating function of the Lagrangian manifold of $n$-cycles of the local symplectic groupoid constructed in Theorem \ref{T:locgr} around the diagonal $M_n$ of $M^n$ is given by the function~$\mathcal{C}_n$.

If $F(x_1, \ldots,x_n)$ is a function of $n$ variables and $(U, \{x^i\})$ is a coordinate chart on $M$,
we will use the following notations for the partial derivatives of the function $F$ on $U^n$:
\[
\frac{\p F}{\p x_k^i} = \p^k_iF.
\]
In particular, $\Omega = (\p^1_i \p^2_j \Phi)(x,y) dx^i \wedge dy^j$ and
\[
\frac{\p}{\p x^i} (\Phi(x,x)) = (\p^1_i \Phi)(x,x) + (\p^2_i \Phi)(x,x).
\]
Since $\epsilon^\ast \Omega = \omega$, we see that locally
\begin{equation}\label{E:omphi}
\omega = (\p^1_i \p^2_j \Phi)(x,x) dx^i \wedge dx^j.
\end{equation}
Shrinking $U$ around $\x$, if necessary, 
we can find an open ball $B$ in $\R^m$ centered at the origin and functions $g,\tilde g: U^2 \to B$ such that
\begin{equation}\label{E:glamb}
\gamma(x,y) = \lambda(x,g(x,y)) = \tilde\lambda(y, \tilde g(x,y))
\end{equation}
for all $(x,y) \in U^2$ as in the proof of Lemma \ref{L:gamma}. 

We will use the coordinates $(x^i,u^i, y^i)$ on $U^3$ and the dual coordinates $(u^i,\xi_i)$ on $T^\ast U$. Let $\kappa:U^3 \to \hat E$ be the mapping given by
\[
\kappa: (x,u, y) \mapsto (u, g(u,y), \tilde g(x,u)).
\]
Denote by $\pi_i, i=1,2,3$, the projections of $U^3$ onto the respective factors and set $\pi_{12}(x,u,y):=(x,u)$ and $\pi_{23}(x,u,y):=(u,y)$.
Recall that the mappings $\tau:\hat E \to E$ and $\tilde\tau:\hat E \to \tilde E$ are given by $\tau(z,\tilde z)=z$ and $\tilde\tau(z,\tilde z)=\tilde z$. We write the point $(u, g(u,y), \tilde g(x,u))$ of $\hat E$ as $(z,\tilde z)$ with $z=(u, g(u,y))$ and $\tilde z = (u, \tilde g(x,u))$.  We see from (\ref{E:glamb}) that
\begin{equation}\label{E:circ}
\lambda  \tau \kappa = \gamma \pi_{23} \mbox{ and } \tilde\lambda \tilde\tau \kappa = \gamma \pi_{12}. 
\end{equation}
In the calculations below we will use the commutative diagrams
\begin{equation}\label{E:diagr}
\xymatrix{
U^3 \ar@{>}[r]^{\kappa}  \ar@{>}[rd]_-{\gamma \pi_{23}} & W \ar@{>}[r]^{\chi |_{W}} \ar@{>}[d]_-{\lambda \tau}&  V  \ar@{>}[ld]^-s\\
& M & 
}
\quad
\xymatrix{
U^3 \ar@{>}[r]^{\kappa}  \ar@{>}[rd]_-{\gamma  \pi_{12}} & W \ar@{>}[r]^{\chi |_{W}} \ar@{>}[d]_-{\tilde\lambda \tilde\tau}&  V  \ar@{>}[ld]^-t\\
& M & 
}
 \end{equation}
 whose commutativity follows from Theorem \ref{T:locgr}, (\ref{E:sandt}), and (\ref{E:circ}). Pulling back Eqn (\ref{E:dvarphi}) by $\tau \kappa$, we get that
\begin{eqnarray}\label{E:omvc1}
\pi_{23}^\ast \gamma^\ast\omega - \pi_2^\ast \omega = d \kappa^\ast \tau^\ast \phi
\end{eqnarray}
on $U^2$. A simple calculation shows that the equality
\begin{eqnarray}\label{E:omvc}
(\p^1_i \p^2_j\Phi)(u,y)du^i \wedge dy^j - (\p^1_i \p^2_j\Phi)(u,u)du^i \wedge du^j =\\
 d (((\p^1_i \Phi)(u,u) - (\p^1_i \Phi)(u,y)) du^i) \nonumber
\end{eqnarray}
holds on $U^3$. The left-hand sides of (\ref{E:omvc1}) and (\ref{E:omvc}) agree. The 1-forms $\kappa^\ast \tau^\ast \phi$ and $((\p^1_i \Phi)(u,u) - (\p^1_i \Phi)(u,y)) du^i$ on $U^3$ are horizontal with respect to the projection $\pi_2$, vanish on the diagonal of $U^3$, and have equal differentials. Therefore, they coincide:
\begin{equation}\label{E:kappaxy}
\kappa^\ast \tau^\ast \phi = ((\p^1_i \Phi)(u,u) - (\p^1_i \Phi)(u,y)) du^i.
\end{equation}
Similarly, pulling back Eqn (\ref{E:dtvarphi}) by $\tilde\tau\kappa$, we get that
\begin{eqnarray}\label{E:omvc2}
\pi_{12}^\ast \gamma^\ast\omega - \pi_2^\ast \omega = d \kappa^\ast\tilde\tau^\ast\tilde\phi
\end{eqnarray}
on $U^3$. We see from Eqn (\ref{E:omvc2}) and the equality
\begin{eqnarray*}
(\p^1_i \p^2_j\Phi)(x,u)dx^i \wedge du^j - (\p^1_i \p^2_j\Phi)(u,u)du^i \wedge du^j =\\
 d (((\p^2_i \Phi)(x,u) - (\p^2_i \Phi)(u,u)) du^i) \nonumber
\end{eqnarray*}
that
\begin{equation}\label{E:kappaux}
\kappa^\ast\tilde\tau^\ast\tilde\phi = ((\p^2_i \Phi)(x,u) - (\p^2_i \Phi)(u,u)) du^i
 \end{equation}
on $U^3$. We introduce the Calabi function
\begin{equation}\label{E:tcal}
T(x,u,y) = \Phi(x,u) - \Phi(u,u) + \Phi(u,y) - \Phi(x,y)
\end{equation}
on $U^3$ and obtain from (\ref{E:kappaxy}) and (\ref{E:kappaux}) that
\begin{eqnarray*}
\frac{\p T(x,u,y)}{\p u^i}du^i  = ((\p^2_i\Phi)(x,u) - (\p^2_i\Phi)(u,u))du^i \hskip 1cm\\
 - ((\p^1_i \Phi)(u,u) - (\p^1_i\Phi)(u,y))du^i = \kappa^\ast (\tilde\tau^\ast\tilde\phi - \tau^\ast \phi).\nonumber
\end{eqnarray*}
It follows that the mapping $\chi \kappa : U^3 \to V$ maps $(x,u,y)$ to
\[
(u^i,\xi_i) = \left(u^i, \frac{\p T(x,u,y)}{\p u^i}\right).
\]
We see from the diagrams (\ref{E:diagr}) that
\begin{equation}\label{E:gsgt}
\gamma(u,y)=s\left(u, \frac{\p T(x,u,y)}{\p u}\right) \mbox{ and } \gamma(x, u)=t\left(u, \frac{\p T(x,u,y)}{\p u}\right).
 \end{equation}
 One can check that the Calabi functions $\mathcal{C}_n$ and $T$ satisfy the identity
 \[
 \frac{\p}{\p x_k^i} T(x_{k-1},x_k,x_{k+1}) = \frac{\p \mathcal{C}_n}{\p x_k^i}
 \]
for $k=1,\ldots,n$, where we identify $x_0$ with $x_n$ and $x_{n+1}$ with $x_1$.  Now (\ref{E:gsgt}) implies that
\[
s\left(x_{k-1}, \frac{\p \mathcal{C}_n}{\p x_{k-1}}\right) = \gamma(x_{k-1},x_k) = t\left(x_k, \frac{\p \mathcal{C}_n}{\p x_k}\right),
\]
which means that
\[
\left\{\left(x_k, \frac{\p \mathcal{C}_n}{\p x_k}\right), k = 1, \ldots, n\right\}
\]
is an $n$-cycle in $T^\ast U \cap V$. We have thus proved the following theorem.
\begin{theorem}
The Lagrangian manifold of $n$-cycles of the local symplectic groupoid constructed in Theorem \ref{T:locgr} has a generating function whose germ around the diagonal $M_n$ of $M^n$ is given by the cyclic $n$-point Calabi function~$\mathcal{C}_n$.
\end{theorem}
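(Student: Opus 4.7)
The goal is to identify the Lagrangian $n$-cycle space $\Sigma_n$ locally with the graph of the exact 1-form $d\mathcal{C}_n$ over $M^n$. Concretely, for $(x_1,\ldots,x_n)$ close to the diagonal $M_n$, I want to exhibit the tuple
\begin{equation*}
\alpha_k \;:=\; \left(x_k,\, \frac{\p \mathcal{C}_n}{\p x_k}\right) \in T^\ast_{x_k}M \cap V \qquad (k = 1,\ldots,n)
\end{equation*}
as an $n$-cycle of the groupoid of Theorem \ref{T:locgr}, and to show that the resulting smooth map from a neighborhood of $M_n$ in $M^n$ into $\Sigma_n$ is a local diffeomorphism onto a neighborhood of the unit $n$-cycles. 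Together these two facts mean precisely that $\mathcal{C}_n$ is a generating function for $\Sigma_n$ near $M_n$.

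I would work on a contractible coordinate chart $U$ carrying a potential $\Phi$ of $\Omega$, so that the explicit formula (\ref{E:ncalabi}) is available. Only three consecutive summands of $\mathcal{C}_n$ contain $x_k$; differentiating them and comparing with the corresponding derivative of the three-point Calabi function (\ref{E:tcal}) gives the identity
\begin{equation*}
\frac{\p \mathcal{C}_n}{\p x_k^i}(x_1,\ldots,x_n) \;=\; \frac{\p T}{\p u^i}(x_{k-1},x_k,x_{k+1}),
\end{equation*}
with the cyclic convention $x_0 := x_n$ and $x_{n+1}:=x_1$. Substituting into (\ref{E:gsgt}) with $(x,u,y)=(x_{k-1},x_k,x_{k+1})$ then yields
\begin{equation*}
s(\alpha_k) \;=\; \gamma(x_k,x_{k+1}), \qquad t(\alpha_k) \;=\; \gamma(x_{k-1},x_k),
\end{equation*}
so $s(\alpha_{k-1}) = t(\alpha_k)$ cyclically and the morphisms $\alpha_k$ are pairwise composable.

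To complete the $n$-cycle verification I would transfer via the diffeomorphism $f:V\to f(V)\subset M^2$ of Theorem \ref{T:locgr}: each $\alpha_k$ corresponds to the pair $(t(\alpha_k), s(\alpha_k)) = (\gamma(x_{k-1},x_k),\gamma(x_k,x_{k+1}))$, and pair-groupoid composition telescopes the product to $(\gamma(x_n,x_1),\gamma(x_n,x_1))$, a unit. Hence $(\alpha_1,\ldots,\alpha_n)\in\Sigma_n$. Finally I would check that $(x_1,\ldots,x_n)\mapsto(\alpha_1,\ldots,\alpha_n)$ is a local diffeomorphism near $M_n$: both $M^n$ and the Lagrangian $\Sigma_n\subset(T^\ast M)^n$ have dimension $2nm$; the map sends $(x,\ldots,x)$ to the unit $n$-cycle at $x$ because $d\mathcal{C}_n$ vanishes along $M_n$ (forced by the vanishing of $C_n$ on the two boundary diagonals, which kills both $x$- and $y$-gradients on the total diagonal); and injectivity of the differential at a diagonal point is immediate from the explicit form of $\alpha_k$. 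The main obstacle I anticipate is the verification that $\alpha_1\cdots\alpha_n$ actually reduces to a unit, rather than merely that consecutive source-target data match cyclically; once the pair-groupoid transfer settles that, the rest is bookkeeping around (\ref{E:gsgt}).
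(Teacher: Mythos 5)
Your proposal is correct and follows essentially the same route as the paper: both rest on the identity $\p\mathcal{C}_n/\p x_k = \p T(x_{k-1},x_k,x_{k+1})/\p u$ combined with (\ref{E:gsgt}) to get the cyclic matching $s(\alpha_{k-1})=\gamma(x_{k-1},x_k)=t(\alpha_k)$. Your explicit pair-groupoid telescoping of the product to a unit and the dimension/local-diffeomorphism check are just careful completions of steps the paper leaves implicit (the latter also follows from the fact that the graph of $d\mathcal{C}_n$ and the Lagrangian manifold $\Sigma_n$ have the same dimension $2mn$).
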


\section{A critical point of the function $u\mapsto T(x,u,y)$}

In Section \ref{S:gamma} we constructed the mapping $\gamma: V_\Gamma \to M$, where $V_\Gamma$ is a neighborhood of the diagonal $M_2$ in $M^2$. It satisfies (\ref{E:defg}), (\ref{E:maing}), and (\ref{E:geps}). We also defined the Calabi function $T=T(x,u,y)$ in terms of a local potential of the form $\Omega$. The function $T$ does not depend on the choice of local potentials and is defined on some neighborhood $W_T$ of the diagonal $M_3$ in $M^3$.

In this section we will show that if $x$ and $y$ are close, then $\gamma(x,y)$ is a nondegenerate critical point of the function $u \mapsto T(x,u,y)$ with zero critical value, $T(x, \gamma(x,y),y)=0$. Heuristically it means that for some constant $C$ the oscillatory integral operator
\[
f(x) \mapsto C \hbar^{-m} \int e^{\frac{i}{\hbar}(\Phi(x,y) - \Phi(y,y))} f(y)\, dy
\]
is an approximate projector.

Let $(x,y) \mapsto (z(x,y), \tilde z(x,y))$ be the mapping from $V_\Gamma$ to $W_\Gamma$ inverse to $\pi\times\tilde\pi|_{W_\Gamma}: W_\Gamma \to V_\Gamma$. Thus, $\pi(z(x,y))=x$ and $\tilde\pi(\tilde z(x,y))=y$. We have $z(x,x)=z_x$ and $\tilde z(x,x)=\tilde z_x$.

\begin{lemma}\label{L:uprpr}
\begin{enumerate}[(i)]
\item There is a neighborhood $V'$ of $M_2$ in $V_\Gamma$ such that
\begin{equation}\label{E:gggg}
(\gamma(x,y),y) \in V_\Gamma\mbox{ and }\gamma(\gamma(x,y),y)=\gamma(x,y)
 \end{equation}
for any $(x,y)\in V'$.
\item There is a neighborhood $V''$ of $M_2$ in $V_\Gamma$ such that
\begin{equation}\label{E:gggg}
 (x, \gamma(x,y)) \in V_\Gamma\mbox{ and }\gamma(x, \gamma(x,y))=\gamma(x,y)
 \end{equation}
for any $(x,y)\in V''$.
\end{enumerate}
\end{lemma}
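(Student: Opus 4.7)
The plan is to use the uniqueness built into Lemma \ref{L:gamma}: the inverse $(\pi \times \tilde\pi)^{-1}$ on $W_\Gamma$ assigns to each $(a,b) \in V_\Gamma$ a \emph{unique} pair $(z(a,b),\tilde z(a,b)) \in W_\Gamma$ with $\lambda(z(a,b))=\tilde\lambda(\tilde z(a,b)) = \gamma(a,b)$. So to prove $\gamma(\gamma(x,y),y) = \gamma(x,y)$, I need only exhibit \emph{some} pair $(z',\tilde z') \in W_\Gamma$ lying over $(\gamma(x,y),y)$ for which $\lambda(z') = \gamma(x,y)$.

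Set $p := \gamma(x,y)$. The natural candidate is $(z_p,\tilde z(x,y))$, where $z_p$ is the zero element of $E$ over $p$. Indeed, $\pi(z_p)=p$ and $\lambda(z_p)=p$ because $\lambda=\pi$ on $Z_E$; and $\tilde\pi(\tilde z(x,y))=y$, while $\tilde\lambda(\tilde z(x,y))=\gamma(x,y)=p$ by the very definition of $\gamma$. Thus $(z_p,\tilde z(x,y))\in\Gamma$ and it projects to $(p,y)$, so if it belongs to $W_\Gamma$ then the uniqueness clause of Lemma \ref{L:gamma} forces it to \emph{be} $(z(p,y),\tilde z(p,y))$, whence $\gamma(p,y)=\lambda(z_p)=p$. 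Part (ii) is completely symmetric using the pair $(z(x,y),\tilde z_p)$ over $(x,p)$.

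The only thing left is the neighborhood argument. Since $\gamma$ is continuous and $\gamma(x_0,x_0)=x_0$ by (\ref{E:geps}), as $(x,y)$ approaches the diagonal at $x_0$, the point $p=\gamma(x,y)$ approaches $x_0$, so $(p,y)\to(x_0,x_0)\in M_2\subset V_\Gamma$ and simultaneously $(z_p,\tilde z(x,y))\to(z_{x_0},\tilde z_{x_0})\in Z_{\hat E}\subset W_\Gamma$. Because $V_\Gamma$ and $W_\Gamma$ are open and contain $M_2$ and $Z_{\hat E}$ respectively, the set
\[
V' := \{(x,y)\in V_\Gamma \mid (\gamma(x,y),y)\in V_\Gamma \text{ and } (z_{\gamma(x,y)},\tilde z(x,y))\in W_\Gamma\}
\]
is open and contains $M_2$, giving the required neighborhood. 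The analogous $V''$ is defined symmetrically.

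I do not expect a real obstacle here; the only delicate point is to resist confusing $\hat E$ (pairs over the same base) with $\Gamma$ (pairs with matching $\lambda/\tilde\lambda$ image), since the candidate pair $(z_p,\tilde z(x,y))$ lies in $\Gamma$ but generally \emph{not} in $\hat E$. Once that distinction is kept straight, the argument reduces to continuity plus the local uniqueness of the intersection point provided by Lemma \ref{L:gamma}.
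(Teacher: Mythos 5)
Your proof is correct and follows essentially the same route as the paper: the paper also forms the pair $(z_{\gamma(x,y)},\tilde z(x,y))\in\Gamma$, takes $V'$ to be the preimage of $W_\Gamma$ under the map $(x,y)\mapsto(z_{\gamma(x,y)},\tilde z(x,y))$ (which sends $M_2$ to $Z_{\hat E}$), and reads off $\gamma(\gamma(x,y),y)=\lambda(z_{\gamma(x,y)})=\gamma(x,y)$ from the uniqueness in Lemma \ref{L:gamma}, with part (ii) handled symmetrically. Your remark distinguishing $\Gamma$ from $\hat E$ is apt, and the extra condition $(\gamma(x,y),y)\in V_\Gamma$ in your definition of $V'$ is redundant but harmless.
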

\begin{proof}
Suppose that $(x,y) \in V_\Gamma$. Then $\gamma(x,y)=\lambda(z(x,y)) = \tilde\lambda(\tilde z(x,y))$. It follows from the equalities $\lambda(z_{\gamma(x,y)}) = \gamma(x,y)=\tilde\lambda(\tilde z(x,y))$ that
the range of the mapping
\begin{equation}\label{E:uprime}
 V_\Gamma \ni (x,y) \mapsto (z_{\gamma(x,y)}, \tilde z(x,y))
 \end{equation}
lies in $\Gamma$. The mapping (\ref{E:uprime}) maps $(x,x)$ to $(z_x, \tilde z_x)$ for all $x \in M$ and thus it maps $M_2$ to $Z_{\hat E}$.
Denote by $V'$ the inverse image of $W_\Gamma$ with respect to the mapping (\ref{E:uprime}). It is easy to check that $V'$ satisfies the conditions of part (i) of the lemma. Part (ii) can be proved similarly.
\end{proof}
We define mappings $\alpha,\beta: V_\Gamma \to M^2$ as follows:
\[
\alpha: (x,y) \mapsto (\gamma(x,y),y) \mbox{ and } \beta: (x,y) \mapsto (x, \gamma(x,y)).
\]
Lemma \ref{L:uprpr} implies that
\begin{eqnarray}\label{E:alal}
\alpha(V') \subset V_\Gamma \mbox{ and } \alpha \circ (\alpha|_{V'}) = \alpha|_{V'}; \\
 \beta(V'')\subset V_\Gamma \mbox{ and } \beta \circ (\beta|_{V''}) = \beta|_{V''}. \nonumber
 \end{eqnarray}
If $(x,y)\in V'$, then $\beta\alpha(x,y)=(\gamma(x,y),\gamma(x,y))$ and if $(x,y)\in V''$, then $\alpha\beta(x,y)=(\gamma(x,y),\gamma(x,y))$. Observe that $\epsilon\gamma(x,y)=(\gamma(x,y),\gamma(x,y))$ for $(x,y)\in V_\Gamma$. Hence,
\begin{equation}\label{E:albeta}
\alpha \circ (\beta|_{V''}) = \epsilon\gamma|_{V''} \mbox{ and } \beta \circ (\alpha|_{V'}) = \epsilon\gamma|_{V'}.
 \end{equation}

Let $\x$ be an arbitrary point in $M$ and $U$ be a contractible neighborhood of $\x$ in $M$. One can shrink $U$ around $\x$ so that if $\alpha^\ast$ is applied to (\ref{E:omom}), it will follow from (\ref{E:alal}) and (\ref{E:albeta}) that
\[
\alpha^\ast\Omega = \alpha^\ast \beta^\ast\Omega=\Omega
\]
on $U^2$. Similarly, we can assume that $\beta^\ast \Omega = \Omega$ on $U^2$. Let $\Phi$ be a potential of $\Omega$ on $U$, so that
\[
   \Omega = d_x d_y \Phi = d d_y \Phi,
\]
where $d= d_x+d_y$ is the decomposition of the exterior derivative with respect to the product structure on $M^2$. Shrinking $U$ further, we can assume that $\Phi(\gamma(x,y),y), \Phi(x,\gamma(x,y))$, and $\Phi(\gamma(x,y),\gamma(x,y))$ are defined for all $x,y\in U$. We obtain from $\alpha^\ast\Omega=\Omega$ that
\[
d(\alpha^\ast d_y \Phi - d_y\Phi)=0.
\]
It is easy to check that the closed form $\alpha^\ast d_y \Phi - d_y\Phi$ is of type (0,1) with respect to the product structure on $U^2$ and vanishes on the diagonal of~$U^2$. Therefore, it vanishes identically on $U^2$. Similarly, the form $\beta^\ast d_x \Phi - d_x\Phi$ also vanishes on $U^2$. Thus,
\begin{equation}\label{E:vanish}
     \alpha^\ast d_y \Phi = d_y\Phi \mbox{ and } \beta^\ast d_x\Phi = d_x\Phi
 \end{equation}
on $U^2$. It follows from (\ref{E:albeta}) that 
\[
T(x,\gamma(x,y),y) = \Phi - \alpha^\ast \Phi - \beta^\ast \Phi + \alpha^\ast\beta^\ast\Phi
\]
for all $x,y\in U$. Differentiating this equality, we get from (\ref{E:vanish}) that
\begin{eqnarray*}
d(T(x,\gamma(x,y),y)) = d_x\Phi - \alpha^\ast d_x\Phi - \beta^\ast d_x\Phi + \alpha^\ast\beta^\ast d_x \Phi + \\
d_y\Phi - \alpha^\ast d_y\Phi - \beta^\ast d_y\Phi + \alpha^\ast\beta^\ast d_y \Phi =\\
 - \alpha^\ast d_x\Phi + \alpha^\ast\beta^\ast d_x \Phi  - \beta^\ast d_y\Phi + \beta^\ast \alpha^\ast d_y \Phi =\\
 -\alpha^\ast(d_x\Phi - \beta^\ast \Phi) - \beta^\ast(d_y\Phi - \alpha^\ast\Phi)=0.
\end{eqnarray*}
Since the function $T(x,\gamma(x,y),y)$ vanishes on the diagonal of $U^2$ and is constant, it vanishes identically on $U^2$.
Since $U^2$ is a neighborhood of the diagonal point $(\x,\x)\in M_2$ in $M^2$ and $\x\in M$ is arbitrary, we arrive at the following statement.
\begin{lemma}\label{L:crval}
There exists a neighborhood $V_0$ of $M_2$ in $V_\Gamma$ such that $(x, \gamma(x,y),y)\in W_T$ and the
 identity $T(x,\gamma(x,y),y)=0$ holds for any $(x,y) \in V_0$.
\end{lemma}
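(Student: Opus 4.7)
The plan is to consolidate the localized computation that precedes the statement into a single global assertion. The preceding discussion has already established, for an arbitrary $\x\in M$, the existence of a contractible neighborhood $U$ of $\x$ on which a local potential $\Phi$ of $\Omega$ is defined, on which the invariances $\alpha^\ast\Omega=\beta^\ast\Omega=\alpha^\ast\beta^\ast\Omega=\Omega$ hold (via (\ref{E:omom}), (\ref{E:alal}), (\ref{E:albeta})), and on which the identity
\[
T(x,\gamma(x,y),y) = \Phi - \alpha^\ast\Phi - \beta^\ast\Phi + \alpha^\ast\beta^\ast\Phi
\]
is valid. Differentiating this identity and regrouping, the invariance formulas (\ref{E:vanish}) collapse the result to zero, so $(x,y)\mapsto T(x,\gamma(x,y),y)$ is locally constant on the connected set $U^2$. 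Together with the diagonal value $T(\x,\x,\x)=0$, read off directly from (\ref{E:tcal}) and $\gamma(\x,\x)=\x$, this forces $T(x,\gamma(x,y),y)\equiv 0$ on $U^2$.

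To obtain the global $V_0$, I would set $V_0 := \bigcup_{\x\in M} U_\x^2$, which is an open neighborhood of $M_2$ in $M^2$ on which the identity $T(x,\gamma(x,y),y)=0$ therefore holds. The auxiliary condition $(x,\gamma(x,y),y)\in W_T$ is secured by shrinking each $U_\x$ if necessary; this is possible because the map $(x,y)\mapsto(x,\gamma(x,y),y)$ is continuous and sends the diagonal of $U_\x^2$ into $M_3\subset W_T$, which is open.

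The delicate point is the shrinking step: one needs a single $U_\x$ on which \emph{all} required properties hold simultaneously, namely that $\Phi$ exists, that $\alpha$ and $\beta$ take $U_\x^2$ into $V_\Gamma$ so that the iterated pullbacks make sense, that the three invariance identities for $\Omega$ are valid there, and that $(x,\gamma(x,y),y)\in W_T$. Each is a local open condition at $(\x,\x)$, so their intersection remains an open neighborhood of the diagonal point; the real geometric content is already carried by the invariance of $\Omega$ under $\alpha$ and $\beta$, which reflects the fact that these maps retract $V_\Gamma$ along the Lagrangian fibres $\tilde\Lambda_y$ and $\Lambda_x$ onto the graph of $\gamma$.
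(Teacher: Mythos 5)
Your proposal is correct and follows essentially the same route as the paper: the identity $T(x,\gamma(x,y),y)=\Phi-\alpha^\ast\Phi-\beta^\ast\Phi+\alpha^\ast\beta^\ast\Phi$, differentiated using (\ref{E:vanish}), shows the function is constant on $U_\x^2$, and its vanishing on the diagonal then forces it to vanish identically, which is exactly how the paper argues before invoking the arbitrariness of $\x$. Your explicit union $V_0=\bigcup_{\x\in M}U_\x^2$ and the shrinking step guaranteeing $(x,\gamma(x,y),y)\in W_T$ only make precise what the paper leaves implicit.
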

\begin{lemma}\label{L:nondeg}
There exists a neighborhood $V_1$ of $M_2$ in $V_\Gamma$ such that $(x, \gamma(x,y),y)\in W_T$
and the point $\gamma(x,y)$ is a nondegenerate critical point of the function $u \mapsto T(x,u,y)$ for any $(x,y)\in V_1$. 
\end{lemma}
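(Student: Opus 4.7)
Plan: I will establish the lemma in three stages: the critical point property, a Hessian computation at the triple diagonal, and extension by continuity.

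For the critical point, applying (\ref{E:gsgt}) at $u_0 := \gamma(x,y)$ together with Lemma \ref{L:uprpr} gives $s(u_0, \partial T/\partial u(x,u_0,y)) = \gamma(u_0,y) = u_0$ and $t(u_0, \partial T/\partial u(x,u_0,y)) = \gamma(x,u_0) = u_0$. Since $(t,s) = (\pi_1, \pi_2) \circ f$ and $f$ is a diffeomorphism of $V$ onto $f(V)$, the only point of $V$ whose source and target both equal $u_0$ is the unit $z_{u_0} \in Z_{T^\ast M}$; hence $\partial T/\partial u(x,u_0,y) = 0$, so $u_0$ is a critical point of $u \mapsto T(x,u,y)$.

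For the Hessian, I work at the triple diagonal $x=y=u=x_0$ using a local potential $\Phi$ of $\Omega$ on a neighborhood of $(x_0,x_0)$. Differentiating (\ref{E:tcal}) twice in $u$ and evaluating at $(x_0,x_0,x_0)$ yields, after cancellation,
\[
H_{jk}(x_0,x_0) := \frac{\partial^2 T}{\partial u^j \partial u^k}(x_0,x_0,x_0) = -(A_{jk} + A_{kj}),
\]
where $A_{jk} := (\partial^1_j \partial^2_k \Phi)(x_0, x_0)$. Because these mixed partials are invariant under changes $\Phi \mapsto \Phi + F(x) + G(y)$, the bilinear form $A$ is intrinsic and equals the $(1,1)$-pairing $A(u,v) = \Omega((u,0),(0,v))$ for $u,v \in T_{x_0}M$.

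Next, using $\Omega = \gamma^\ast\omega$, I rewrite $A(u,v) = \omega(\gamma_\ast(u,0), \gamma_\ast(0,v))$. The key geometric identification is $\gamma_\ast(u,0) = P_{\tilde L}(u)$ and $\gamma_\ast(0,v) = P_L(v)$, where $L := T_{x_0}\Lambda_{x_0}$, $\tilde L := T_{x_0}\tilde\Lambda_{x_0}$, and $P_L, P_{\tilde L}$ are the projections of $T_{x_0}M = L \oplus \tilde L$ onto the respective summands. This is obtained by differentiating the two expressions $\gamma(x,x_0) = \lambda(x,g(x,x_0)) = \tilde\lambda(x_0, \tilde g(x,x_0))$ at $x = x_0$ and solving the resulting linear system using the invertibility of the transversality matrix (\ref{E:Jac3}). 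Writing $u = u_L + u_{\tilde L}$ with $u_L \in L$, $u_{\tilde L} \in \tilde L$ (and similarly for $v$) and using that $\omega$ vanishes on $L \times L$ and on $\tilde L \times \tilde L$, we obtain
\[
A(u,v) + A(v,u) = \omega(u_{\tilde L}, v_L) - \omega(u_L, v_{\tilde L}).
\]
Given any nonzero $u$, at least one of $u_L, u_{\tilde L}$ is nonzero; the nondegeneracy of $\omega|_{L \times \tilde L}$ (a consequence of $L, \tilde L$ being transverse Lagrangians) then lets us choose $v \in L$ or $v \in \tilde L$ making the expression nonzero. Hence $H(x_0,x_0)$ is invertible for every $x_0 \in M$.

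Finally, since $H(x,y)$ depends smoothly on $(x,y) \in V_0$, the set $V_1 := \{(x,y) \in V_0 : \det H(x,y) \neq 0\}$ is open in $V_\Gamma$ and contains $M_2$, providing the neighborhood claimed. The main obstacle is the projection formula $\gamma_\ast(u,0) = P_{\tilde L}(u)$; once this identification is in hand, the remaining arguments are routine linear algebra and the openness of the nondegeneracy condition.
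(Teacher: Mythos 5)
Your proof is correct, and it reaches the same two computational landmarks as the paper's proof (the vanishing of $\p T/\p u^i$ at $u=\gamma(x,y)$, and the Hessian formula $-\bigl((\p^1_i\p^2_j\Phi)+(\p^1_j\p^2_i\Phi)\bigr)$ at the diagonal), but it gets there by a genuinely different route in two places. For the critical-point step, the paper differentiates $T$ directly and cancels terms using the identities $\alpha^\ast d_y\Phi=d_y\Phi$, $\beta^\ast d_x\Phi=d_x\Phi$ from (\ref{E:vanish}); you instead combine (\ref{E:gsgt}) with the idempotence identities of Lemma \ref{L:uprpr} and the injectivity of $f$ on $V$, so that the criticality of $\gamma(x,y)$ is read off as the statement that the corresponding groupoid element is a unit. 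This is more conceptual and makes the geometric meaning of the critical point transparent, at the small cost of having to keep track of the neighborhoods on which (\ref{E:gsgt}), Lemma \ref{L:uprpr}, and the injectivity of $f$ all hold simultaneously (a finite intersection of neighborhoods of $M_2$, so harmless). For the nondegeneracy step, the paper introduces the matrices $A=(\p^1_i\gamma^k)(x,x)$, $B=(\p^2_i\gamma^k)(x,x)$, proves they are complementary projectors, and writes the Hessian as $C(A-B)$ with $A-B$ an involution; you identify the images of these projectors as $\tilde L=T_x\tilde\Lambda_x$ and $L=T_x\Lambda_x$ (which is exactly the content of Lemma \ref{L:oneone} plus $\gamma\epsilon=\mathrm{id}$) and conclude by the nondegeneracy of the pairing $\omega|_{\tilde L\times L}$ for transverse Lagrangians. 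The two arguments are algebraically equivalent, but yours makes explicit why transversality of the Lagrangian fields is the operative hypothesis. The final openness argument is identical in both. No gaps.
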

\begin{proof}

Assume that $U$ is a neighborhood of an arbitrary point $\x$ in $M$ such that $U^2 \subset V' \cap V''$, where $V'$ and $V''$ are as in Lemma~\ref{L:uprpr}. We additionally assume that $U$ lies in a contractible coordinate chart $(\hat U, \{x^i\})$, is such that $\gamma(U^2) \subset \hat U$, and $\Phi$ is a potential of the form $\Omega$ on $\hat U^2$. It will be convenient to denote $\gamma(x,y)$ by $\bar \gamma$. It follows from (\ref{E:vanish}) that in coordinates
\[
(\p^1_i\Phi)(x,\bar\gamma)= (\p^1_i\Phi)(x,y) \mbox{ and } (\p^2_i\Phi)(\bar\gamma,y) = (\p^2_i\Phi)(x,y).
\]
Applying $\alpha^\ast$ to the first identity and $\beta^\ast$ to the second one, we get from Lemma \ref{L:uprpr} that
\[
(\p^1_i\Phi)(\bar\gamma,\bar\gamma)= (\p^1_i\Phi)(\bar\gamma,y) \mbox{ and } (\p^2_i\Phi)(\bar\gamma,\bar\gamma) = (\p^2_i\Phi)(x,\bar\gamma).
\]
Using these identities, we obtain that
\begin{eqnarray*}
\frac{\p}{\p u^i}  T(x,u,y)|_{u=\bar\gamma} = (\p^2_i\Phi)(x, \bar\gamma) - (\p^1_i\Phi)(\bar\gamma,\bar\gamma) - \\
(\p^2_i\Phi)(\bar\gamma,\bar\gamma) + (\p^1_i\Phi)(\bar\gamma,y) =0.
\end{eqnarray*}
Thus, $\bar\gamma=\gamma(x,y)$ is a critical point of the function $u \mapsto T(x,u,y)$ for all $x,y\in U$. Now we will show that there exists a neighborhood $U_1$ of $\x$ in $U$ such that this critical point is nondegenerate for all $x,y \in U_1$. Writing the form $\Omega$ on $U$ in local coordinates as
\[
\Omega = \frac{1}{2} \omega_{kl}(\gamma) d\gamma^k \wedge d\gamma^l,
\]
we obtain from Lemma \ref{L:oneone} that
\begin{equation}\label{E:omcoord}
\omega_{kl}(\gamma) \frac{\p \gamma^k}{\p x^i} \frac{\p \gamma^l}{\p x^j}=0  \mbox{ and }  \omega_{kl}(\gamma) \frac{\p \gamma^k}{\p y^i} \frac{\p \gamma^l}{\p y^j}=0.
\end{equation}
Consider the matrices
\[
A := ((\p^1_i \gamma^k)(x,x)), B := ((\p^2_i \gamma^k)(x,x)),  \mbox{ and } C := (\omega_{kl}(x)),
\]
where $x\in U$. Since $\gamma(x,x)=x$, we get that $A + B = 1$  (the identity matrix). We derive from (\ref{E:omcoord}) that
\[
A^t C A =0 \mbox{ and }  B^t C B = 0,
\]
whence it follows that
\[
C = A^t C + CA.
\]
Therefore, $B=C^{-1} A^t C$ and $BA=C^{-1} (A^t C A)=0$, which implies that $A$ and $B$ are complementary projectors. 

{\it Remark} \ It can be seen from (\ref{E:omcoord}) that for a fixed $x$ both projectors $A$ and $B$ are of rank $m$.

We have that
\[
\Omega = \omega_{kl}(\gamma) \frac{\p \gamma^k}{\p x^i} \frac{\p\gamma^l}{\p y^j}dx^i \wedge dy^j = (\p^1_i \p^2_j\Phi)(x,y) dx^i \wedge dy^j
\]
on $U$. Therefore,
\[
\omega_{kl}(\gamma) \frac{\p \gamma^k}{\p x^i} \frac{\p\gamma^l}{\p y^j}= (\p^1_i \p^2_j\Phi)(x,y).
\]
Setting $y=x$, we get
\begin{equation}\label{E:atcb}
A^t C B = ((\p^1_i \p^2_j\Phi)(x,x)),
\end{equation} 
whence
\begin{equation}\label{E:btca}
B^t C A = -((\p^1_j \p^2_i\Phi)(x,x)).
\end{equation} 
We want to show that if $x$ and $y$ are close, then the critical point $u=\gamma(x,y)$ of the function $u \mapsto T(x,u,y)$ is nondegenerate, that is, the Hessian matrix
\[
((\p^2_i\p^2_j T)(x, \gamma(x,y),y))
\]
is nondegenerate. Here $\p^2_i$ is the partial derivative with respect to $u^i$. It suffices to show that it holds true if $x=y$, that is, the matrix
\[
((\p^2_i\p^2_j T)(x,x,x))
\]
is nondegenerate. We have
\begin{eqnarray*}
\frac{\p^2 T(x,u,y)}{\p u^i \p u^j} = (\p^2_i \p^2_j \Phi)(x, u) - (\p^1_i\p^1_j\Phi) (u,u)  -  (\p^1_i \p^2_j \Phi)(u,u) -\\ 
(\p^2_i \p^1_j \Phi)(u,u) - (\p^2_i \p^2_j \Phi)(u,u) + (\p^1_i \p^1_j \Phi)(u,y),
\end{eqnarray*}
whence 
\begin{eqnarray*}
(\p^2_i\p^2_j T)(x,x,x) = (\p^2_i \p^2_j \Phi)(x, x) - (\p^1_i\p^1_j\Phi) (x,x)  -  (\p^1_i \p^2_j \Phi)(x,x) -\\ 
(\p^2_i \p^1_j \Phi)(x,x) - (\p^2_i \p^2_j \Phi)(x,x) + (\p^1_i \p^1_j \Phi)(x,x) =\\
 -  (\p^1_i \p^2_j \Phi)(x,x) - (\p^1_j \p^2_i \Phi)(x,x).
\end{eqnarray*}

Using that $A+B=1$, we get from (\ref{E:atcb}) and (\ref{E:btca}) that
\begin{eqnarray*}
((\p^2_i\p^2_j T)(x,x,x)) = - A^t CB + B^t CA =\\
 - A^t C + A^t CA + CA - A^t CA = - A^t C + CA=\\
 C(A - C^{-1} A^t C) = C(A-B).
\end{eqnarray*}
Since $A$ and $B$ are complementary projectors, $A-B$ is invertible. Therefore, the Hessian matrix $((\p^2_i\p^2_j T)(x,x,x))$ is nondegenerate.

\end{proof}

Lemmas \ref{L:crval} and \ref{L:nondeg} imply the following theorem.
\begin{theorem}
There exists a neighborhood $V_T$ of $M_2$ in $V_\Gamma$ such that $(x,\gamma(x,y),y) \in W_T$ and $\gamma(x,y)$ is a nondegenerate critical point of the function $u \mapsto T(x,u,y)$ with zero critical value for all $(x,y) \in V_T$.
\end{theorem}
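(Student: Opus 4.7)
The plan is essentially bookkeeping: the two preceding lemmas have done all the real work, and what remains is just to combine their conclusions on a common neighborhood of the diagonal.

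Concretely, I would set
\[
V_T := V_0 \cap V_1,
\]
where $V_0$ is the neighborhood of $M_2$ in $V_\Gamma$ supplied by Lemma \ref{L:crval} (on which $(x,\gamma(x,y),y)\in W_T$ and $T(x,\gamma(x,y),y)=0$), and $V_1$ is the neighborhood supplied by Lemma \ref{L:nondeg} (on which $(x,\gamma(x,y),y)\in W_T$ and $\gamma(x,y)$ is a nondegenerate critical point of $u\mapsto T(x,u,y)$). Since $V_0$ and $V_1$ are open neighborhoods of $M_2$ in the topological space $V_\Gamma$, their intersection $V_T$ is again an open neighborhood of $M_2$ in $V_\Gamma$.

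For any $(x,y)\in V_T$, both membership relations $(x,\gamma(x,y),y)\in W_T$ hold by construction; Lemma \ref{L:crval} gives the zero critical value $T(x,\gamma(x,y),y)=0$, and Lemma \ref{L:nondeg} gives that $\gamma(x,y)$ is a critical point of $u\mapsto T(x,u,y)$ whose Hessian is nondegenerate. These are precisely the three assertions of the theorem.

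There is no genuine obstacle here: the substantive content is in the two lemmas, where vanishing was derived from the identities $\alpha^\ast d_y\Phi=d_y\Phi$ and $\beta^\ast d_x\Phi=d_x\Phi$ together with $T(x,\gamma(x,y),y)=\Phi-\alpha^\ast\Phi-\beta^\ast\Phi+\alpha^\ast\beta^\ast\Phi$, and nondegeneracy was reduced to the fact that the matrices $A=((\p^1_i\gamma^k)(x,x))$ and $B=((\p^2_i\gamma^k)(x,x))$ are complementary projectors of rank $m$, so that $A-B$ is invertible and hence so is the Hessian $C(A-B)$. The theorem itself is then a one-line corollary.
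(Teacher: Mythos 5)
Your proposal is correct and is exactly the paper's argument: the author states that Lemmas \ref{L:crval} and \ref{L:nondeg} imply the theorem, i.e., one intersects the neighborhoods $V_0$ and $V_1$ from those lemmas to obtain $V_T$. Your recap of where the substantive work lives (the pullback identities for $d_x\Phi$, $d_y\Phi$ and the complementary projectors $A$, $B$) matches the paper's proofs of those lemmas as well.
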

\begin{center}
{\bf Discussion} 
\end{center}

The geometric objects considered in this paper are germs of real smooth submanifolds of a vector bundle containing its zero section or of $M^n$ containing the diagonal $M_n$. One can generalize the results of this paper in two possible ways. Instead of fields of real Lagrangian submanifolds, one can consider fields of H\"ormander's complex Lagrangian ideals. We expect that then one can use germs of complex almost analytic manifolds introduced in \cite{MS} in order to construct a local symplectic groupoid over an almost-K\"ahler manifold from two transversal fields of Lagrangian ideals. For this groupoid, the form $\Omega$, its local potentials, and the corresponding Calabi functions will be complex-valued. We expect that this groupoid should correspond to Berezin quantization via covariant symbols on general symplectic manifolds (see \cite{LC} and references therein). One can also consider formal symplectic groupoids introduced in \cite{CDF} and \cite{CMP2005}, where germs of submanifolds are replaced with their $\infty$-jets. Such formal symplectic groupoids over almost-K\"ahler manifolds were constructed in~\cite{LMP2003}.

\end{document}